\documentclass[letterpaper,10pt,reqno,onefignum,onetabnum]{amsart}
\usepackage[english]{babel}
\usepackage{amsmath}
\usepackage{amsthm}
\usepackage{verbatim}
\usepackage{mathrsfs}
\usepackage{bm}
\usepackage{cite}
\usepackage[foot]{amsaddr}
\usepackage[dvipsnames]{xcolor}
\usepackage{hyperref}
\hypersetup{
	colorlinks = true,
	linkcolor = OliveGreen,
	anchorcolor = OliveGreen,
	citecolor = OliveGreen,
	filecolor = OliveGreen,
	urlcolor = OliveGreen
}
\usepackage{algorithm}
\usepackage{algorithmic}
\usepackage{float}
\usepackage{lipsum}
\usepackage{amsfonts}
\usepackage{amssymb}
\usepackage{graphicx}
\usepackage{hyperref}
\usepackage{textcomp}
\usepackage{epstopdf}
\usepackage{cases}
\usepackage{multirow}
\ifpdf
\DeclareGraphicsExtensions{.eps,.pdf,.png,.jpg}
\else
\DeclareGraphicsExtensions{.eps}
\fi
\usepackage{verbatim}
\usepackage{mathrsfs}
\usepackage{bm}

\newtheorem{theorem}{Theorem}[section]
\newtheorem{proposition}[theorem]{Proposition}

\newtheorem{problem}[theorem]{Problem}
\newtheorem{remark}[theorem]{Remark}
\newtheorem{definition}[theorem]{Definition}

\usepackage{lipsum}
\usepackage{amsfonts}
\usepackage{amssymb}
\usepackage{graphicx}
\usepackage{epstopdf}

\usepackage{cases}
\usepackage{multirow}
\usepackage{algorithmic}
\usepackage{tikz}
\usepackage{booktabs}%
\usetikzlibrary{matrix}
\usetikzlibrary{arrows}
\ifpdf
\DeclareGraphicsExtensions{.eps,.pdf,.png,.jpg}
\else
\DeclareGraphicsExtensions{.eps}
\fi
\usepackage{verbatim}
\usepackage{mathrsfs}
\usepackage{bm}
\usepackage{color}
\usepackage{caption}
\usepackage{subcaption}
\usepackage{enumitem}

\usepackage{geometry}
\geometry{lmargin=1.4in,rmargin=1.4in,bmargin=1.4in,tmargin=1.4in}

\newcommand{\R}{\mathbb{R}}
\newcommand{\RR}{\mathbb{R}}

\newcommand{\HH}{\mathcal H}
\newcommand{\Hi}{\mathcal H}
\newcommand{\N}{\mathbb N}

\newcommand{\RX}{\,]-\infty,+\infty]}

\newcommand{\RPP}{\ensuremath{\left]0,+\infty\right[}}

\newcommand{\RP}{\ensuremath{\left[0,+\infty\right[}}

\newcommand{\menge}[2]{\left\{{#1}~\middle|~{#2}\right\}} 
\newcommand{\scal}[2]{{\left\langle{#1}~\middle|~{#2}\right\rangle}}
\newcommand\norm[1]{\left\|#1\right\|}
\newcommand{\emp}{\ensuremath{{\varnothing}}}

\newcommand{\bigp}[1]{\left( #1 \right)}

\DeclareMathOperator{\dom}{dom}

\DeclareMathOperator{\Id}{Id}

\DeclareMathOperator{\sri}{sri}

\allowdisplaybreaks

\newcommand{\prox}{\ensuremath{\text{\rm prox}}}

\DeclareMathOperator*{\argmin}{arg\,min}

\title[]{Optimal Leveraging of
	Smoothness and Strong Convexity for Peaceman--Rachford Splitting}
\author{Luis Briceño-Arias$^\ddag$}
\author{Fernando Rold\'an$^{\dagger}$}
\address{$^\ddag$ Departamento de Matemática, Universidad Técnica Federico Santa María,
	Santiago, Chile. {\it 
		E-mail address: } 
	{\sf{luis.briceno@usm.com}}. }
\address{$^{\dagger}$Departamento de Ingeniería Matemática, Universidad de Concepción, Concepción, Chile. {\it 
	E-mail address:} 
	{\sf{fernandoroldan@udec.cl}}. }




\begin{document}
	\begin{abstract}
	In this paper, we introduce a simple methodology to leverage 
	strong convexity and smoothness in order to obtain an optimal 
	linear convergence rate for the Peaceman--Rachford splitting (PRS)
	scheme applied to optimization problems involving two smooth strongly convex 
	functions. The approach consists of adding and subtracting 
	suitable quadratic terms from one function to the other so as to 
	redistribute strong convexity in the primal formulation and 
	smoothness in the dual formulation. This yields an equivalent 
	modified optimization problem in which each term has adjustable 
	levels of strong convexity and smoothness.
	In this setting, the Peaceman--Rachford splitting method 
	converges linearly to the solution of the modified problem
	with a 
	convergence rate that can be optimized with respect to the 
	introduced parameters. Upon returning to the original formulation, 
	this procedure gives rise to a modified variant of PRS.
	The optimal linear rate established in this work is strictly better 
	than the best rates previously available in the general setting. The 
	practical performance of the method is illustrated through an 
	academic example and applications in image processing.
		\par
		\bigskip
		
		\noindent \textbf{Keywords.} {\it Convex analysis, Convergence rate, Duality, Fenchel conjugate, Peaceman--Rachford splitting,
			Proximal splitting algorithms}
		\par
		\bigskip \noindent
		2020 {\it Mathematics Subject Classification.} {47H05, 65K05 
			65K15, 90C25.}
		
	\end{abstract}
	
	\maketitle

\section{Introduction}
\label{sec:1}
This paper is devoted to the efficient resolution of the following convex optimization problem defined in a real Hilbert space $\HH$.
We denote by $\Gamma_0(\HH)$ the class
of lower semicontinuous convex proper functions from $\HH$ to $\RX$, given $h\in\Gamma_0(\HH)$,
$\partial h$ denotes its subdifferential, and $h$ is $\gamma$-strongly convex for some $\gamma\ge 0$ if $h-\gamma\|\cdot\|^2/2$ is convex.
\begin{problem}
	\label{prob:main}
	Let $\rho\ge0$ and $\mu\ge 0$, let $f\in\Gamma_0(\HH)$ and 
	$g\in\Gamma_0(\HH)$ be $\rho$ and $\mu$-strongly convex 
	functions, respectively, and suppose that $\partial f$ and $\partial g$
	are $\alpha$ and $\beta$-cocoercive\footnote{An operator $A\colon\HH\to 2^{\HH}$ is $\alpha$-cocoercive for some $\alpha\ge 0$ if, for every $(x,y)\in\HH\times\HH$ and $(u,v)\in Ax\times Ay$, we have $\scal{x-y}{u-v}\ge\alpha\|u-v\|^2$.} operators, respectively, for 
	some $\alpha\ge 
	0$ and $\beta\ge 0$. 
	The problem is to 
	\begin{equation}
		\label{e:main}
		\underset{x\in\HH}{{\rm minimize}}\quad f(x)+g(x),
	\end{equation} 
	under the assumption that its set of solutions $S$ is nonempty.
\end{problem}
Throughout this paper, $0$-strongly convex functions, or functions with $0$-cocoercive subdifferentials, are simply convex functions. This problem appears in several areas as variational mean field games \cite{Bena15,BAKS18,ESAIM1,SVVA1}, optimal transport \cite{Bena15,Carli24,papa13}, image processing \cite{Sigpro,SPL1,pasc21}, among others. 

Problem~\ref{prob:main} can be solved by the Douglas-Rachford splitting (DRS)
\cite{DRS}
or the Peaceman--Rachford splitting (PRS) \cite{PRS}, the latter under an additional strong convexity or cocoercivity assumption on $f$ or $g$ \cite{Lion79}. Given $\tau>0$ and $z_0\in\HH$, PRS iterates
\begin{equation}
	(\forall n\in\N)\quad z_{n+1}=R_{\tau f}R_{\tau g}z_n,
\end{equation}
where, for every $h\in \Gamma_0(\HH)$, $R_{h}=2\prox_{h}-\Id$ is the reflected 
proximity operator of $h$, $\Id$ stands for the identity operator, and $\prox_{h}\colon\HH\to\HH$ is the proximity operator of $h$, which associates to every $x\in\HH$ the unique solution to the lower semicontinuous strongly convex problem
\begin{equation}
	\label{e:proxh}
	\underset{y\in\HH}{{\rm minimize}}\quad h(y)+\frac{1}{2}\|y-x\|^2.
\end{equation}
In the general convex case, the operators $R_{\tau f}$ and $R_{\tau g}$ are nonexpansive, which is not enough to guarantee the weak convergence of a fixed point procedure of PRS.
Indeed, by taking $\HH=\R^2$ and $f$ to be the indicator function of the $x$-axis and $g=0$, it is clear that $R_{\tau g}=\Id$, $R_{\tau f}$ is the reflection with respect to the $x$-axis, and PRS oscillates reflecting on the axis if the starting point is not on it (see, e.g., \cite[Section~5.2]{Mont18}).

On the other hand, for some $\lambda\in\left]0,1\right[$, DRS iterates
\begin{equation}
	\label{e:DRS}
	(\forall n\in\N)\quad z_{n+1}=(1-\lambda)z_n+\lambda R_{\tau f}R_{\tau g}z_n,
\end{equation}
which is a Krasnosel'ski\u{\i}-Mann modification of PRS \cite{Ecks92,Kras55,Mann53} achieving global weak convergence without additional assumptions \cite{Svai11,bauschke2011:Convex_analysis}\footnote{The classical DRS proposed in \cite{DRS} is when $\lambda=1/2$.}. Note that PRS
can be seen as a particular instance of \eqref{e:DRS}
when $\lambda=1$.

In the case when $\rho>0$ and $\alpha>0$, $R_{\tau f}$
is a strict contraction \cite[Theorem~1]{Giselsson2017IEEE}, i.e., it satisfies 
\begin{equation}
	(\forall x\in\HH)(\forall y\in\HH)\quad \|R_{\tau f}x-R_{\tau f}y\|\le r(\tau)\|x-y\|,
\end{equation}
where
$$r(\tau)=\max\left\{\frac{\tau\alpha^{-1}-1}{\tau\alpha^{-1}+1},\frac{1-\tau\rho}{1+\tau\rho}\right\}\in\left]0,1\right[.$$
In this setting, PRS converges linearly with linear convergence rate $r(\tau)$, which is minimized when $\tau=\sqrt{\alpha/\rho}$, leading to 
the optimal linear convergence rate of $r^*=r(\sqrt{\alpha/\rho})
=\frac{1-\sqrt{\alpha\rho}}{1+\sqrt{\alpha\rho}}$. In the case when $\rho>0$ and $\beta>0$, i.e., when $f$ is strongly convex and $g$ is smooth, DRS converges linearly with an optimal rate $1/(1+\sqrt{\beta\rho})$, which is achieved for $\tau=\sqrt{\alpha/\rho}$ and $\lambda=\frac{1+\sqrt{\beta\rho}/2}{1+\sqrt{\beta\rho}}$ \cite[Theorem~5.6]{Giselsson2017}. To the best of our knowledge,
these are the best rates available in the literature for Problem~\ref{prob:main} under strong convexity and smoothness for PRS and DRS. In \cite{Davi17}, 
a review of the convergence rates for DRS and PRS are provided under various regularity assumptions, but
the new derived linear convergence results are not valid for PRS. Of course, there exist alternative algorithms for solving Problem~\ref{prob:main} under strong convexity and smoothness, as the gradient method or the proximal-gradient (or forward-backward) splitting, but their 
optimal rates and numerical performance are worse than that of PRS, as studied in \cite{Sigpro,SPL1}. 

In this paper, we establish a sharp linear convergence rate for a new variant of PRS by optimally leveraging the strong convexity and smoothness properties of Problem~\ref{prob:main} within a unified framework. The rate is obtained by adding and subtracting 
suitable quadratic terms from one function to the other so as to 
redistribute strong convexity in the primal formulation and 
smoothness in the dual formulation. We obtain an equivalent 
modified optimization problem in which each term has adjustable 
parameters of strong convexity and smoothness.
By optimizing these parameters, we provide an optimal rate for general values of strong convexity and smoothness parameters
in a novel algorithm obtained by applying PRS to the equivalent formulation. The optimal rate we obtain reduces to \cite[Theorem~1]{Giselsson2017IEEE} when $\mu=\beta=0$, and is strictly lower than the optimal rates in \cite[Theorem~5.6]{Giselsson2017} when $\mu=\alpha=0$ and in \cite[Corollary~4.1]{Briceno2025Fista} for FISTA under strong convexity.
When strong convexity and/or smoothness appear in both functions, our approach provides an optimal balance of these properties leading to the best linear convergence rate in the literature.
We finally provide some numerical experiments in an academic example and in two image processing problems to illustrate the advantages of our approach.

The paper is organized as follows. In Section~\ref{sec:2} we 
present our notation and preliminary results. Our main results are provided in 
Section~\ref{sec:mainr}. Numerical experiments are studied in Section~\ref{sec:4}. 

\section{Notation and preliminaries}
\label{sec:2}
Throughout this paper, $\mathcal{H}$ is a real Hilbert space 
endowed with the inner product $\scal{\cdot}{\cdot}$ and 
associated norm $\norm{\cdot}$. Given a nonempty closed convex set $C\subset\Hi$,  $\sri (C)$ denotes its strong relative interior.
Let $h: \Hi \to \RX$. The domain of $h$ is $\dom h = \menge{x \in 
	\Hi} {h(x) < +\infty}$ and $h$ is proper if $\dom h\neq \emp$. 
Denote by $\Gamma_0(\mathcal{H})$ the class of proper lower 
semicontinuous convex functions from 
$\Hi$ to $]-\infty, +\infty]$. 
Suppose that $h \in \Gamma_0(\Hi)$. 
The subdifferential of $h$ is the set-valued 
operator
\begin{align}
	\label{eq:subdifferential}
	\partial h: \Hi \to 2^{\Hi} : x \mapsto 
	\menge{u \in \Hi}{(\forall y \in \Hi) \quad \scal{y-x}{u} + h(x) \leq h(y)},
\end{align}  
which is maximally monotone, i.e., it is monotone
$$(\forall (x,y)\in(\dom\partial h)^2)(\forall 
(u,v)\in\partial h(x)\times\partial h(y))\quad 
\scal{x-y}{u-v}\ge0$$
and, if $B\colon\HH\to 2^{\HH}$ is a monotone operator such that ${\rm gra}\, \partial h\subset {\rm gra}\, B$, then $\partial h=B$,
where $\dom \partial h=\menge{x\in\Hi}{\partial h(x)\neq\emp}$
and ${\rm gra}\,B=\menge{(x,u)\in\HH\times\HH}{u\in Bx}$.
The Fenchel conjugate of $h$ is 
\begin{align}
	\label{eq:conjugate}
	h^*: \Hi \to \RX: u \mapsto \sup_{x \in \Hi} \bigp{\scal{x}{u} - h(x)}.
\end{align} 
We have $h^*\in\Gamma_0(\Hi)$, $h^{**} = h$, and $(\partial 
h)^{-1}=\partial h^*$. Set
$\rho\in\RP$. We say that $h$ is $\rho$-strongly convex if 
$h-\rho\|\cdot\|^2/2$ is 
convex. In particular, $h$ is $0$-strongly convex if and only if $h$ is 
convex. Hence, it follows from \cite[Example~22.4(iv)]{bauschke2011:Convex_analysis} that 
$h$ is $\rho$-strongly convex then 
\begin{equation}
	\label{e:strongconvpart}
	(\forall (x,y)\in(\dom\partial h)^2)(\forall 
	(u,v)\in\partial h(x)\times\partial h(y))\quad 
	\scal{x-y}{u-v}\ge\rho\|x-y\|^2.
\end{equation}
Given $\alpha\ge 0$, $\partial h$ is $\alpha$-cocoercive if 
\begin{equation}
	\label{e:coco}
	(\forall (x,y)\in(\dom\partial h)^2)(\forall 
	(u,v)\in\partial h(x)\times\partial h(y))\quad 
	\scal{x-y}{u-v}\ge\alpha\|u-v\|^2.    
\end{equation}
Note that, $h$ is convex if and only if $\partial h$ is $0$-cocoercive. Moreover, if $\alpha>0$ and $\dom\partial h=\HH$, the $\alpha$-cocoercivity of $\partial h$
implies the G\^ateaux-differentiability of $h$ \cite[Propositions~16.27 \& 17.31]{bauschke2011:Convex_analysis}.
In addition, it follows from Cauchy-Schwarz inequality, that $\nabla h$ is $\alpha^{-1}$-Lipschitz continuous, which implies the Fr\'echet differentiability of $h$ \cite[Corollary~17.42]{bauschke2011:Convex_analysis}. 
Actually, we have 
the following stronger result, which was first studied in \cite{BH77}.
\begin{theorem}{\cite[Theorem~18.15]{bauschke2011:Convex_analysis}}
	\label{t:BH}
	Let $h\colon\HH\to\RR$ be continuous and convex and let 
	$\alpha>0$. The following are equivalent:
	\begin{enumerate}
		\item 
		\label{t:BHi}
		$h$ is Fr\'echet differentiable and $\nabla h$ is $\alpha^{-1}$-Lipschitz 
		continuous.
		\item
		\label{t:BHii}
		$h$ is Fr\'echet differentiable and
		$$(\forall x\in\HH)(\forall y\in\HH)\quad \scal{x-y}{\nabla h(x)-\nabla 
			h(y)}\le \alpha^{-1}\|x-y\|^2.$$
		\item 
		\label{t:BHiii}
		$h$ is Fr\'echet differentiable and $\nabla h$ is $\alpha$-cocoercive.
		\item 
		\label{t:BHiv}
		$h^*$ is $\alpha$-strongly convex.
	\end{enumerate}
\end{theorem}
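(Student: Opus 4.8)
The plan is to prove the chain of implications \ref{t:BHi} $\Rightarrow$ \ref{t:BHii} $\Rightarrow$ \ref{t:BHiii} $\Rightarrow$ \ref{t:BHi}, which settles the equivalence of the three smoothness-type conditions, and then to close the loop with the equivalence \ref{t:BHiii} $\Leftrightarrow$ \ref{t:BHiv} by Fenchel duality. Two of these arrows are immediate consequences of the Cauchy--Schwarz inequality. For \ref{t:BHi} $\Rightarrow$ \ref{t:BHii} one simply estimates $\scal{x-y}{\nabla h(x)-\nabla h(y)}\le\norm{x-y}\,\norm{\nabla h(x)-\nabla h(y)}\le\alpha^{-1}\norm{x-y}^2$ using the Lipschitz bound. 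Conversely, for \ref{t:BHiii} $\Rightarrow$ \ref{t:BHi}, cocoercivity gives $\alpha\norm{\nabla h(x)-\nabla h(y)}^2\le\scal{x-y}{\nabla h(x)-\nabla h(y)}\le\norm{x-y}\,\norm{\nabla h(x)-\nabla h(y)}$, and dividing by $\norm{\nabla h(x)-\nabla h(y)}$ (the vanishing case being trivial) yields the $\alpha^{-1}$-Lipschitz continuity of $\nabla h$.

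The substantial step is \ref{t:BHii} $\Rightarrow$ \ref{t:BHiii}, the Baillon--Haddad core. The first observation is that condition \ref{t:BHii} is equivalent to the convexity of $q=\tfrac{1}{2\alpha}\norm{\cdot}^2-h$, hence to the descent-type inequality $h(u)\le h(x)+\scal{\nabla h(x)}{u-x}+\tfrac{1}{2\alpha}\norm{u-x}^2$ for all $x,u\in\HH$ (the first equivalence follows by writing out convexity of $q$, and the descent inequality is justified in the Hilbert setting by integrating $\nabla h$ along the segment joining $x$ and $u$). Fixing $y\in\HH$, I would then introduce the shifted function $\phi_y=h-\scal{\nabla h(y)}{\cdot}$, which is convex, inherits the same descent inequality, and satisfies $\nabla\phi_y(y)=0$, so that $y$ is a global minimizer of $\phi_y$. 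Applying the descent inequality at $x$ to the gradient step $u=x-\alpha\nabla\phi_y(x)$ and then invoking minimality of $\phi_y$ at $y$ gives the one-sided estimate $\tfrac{\alpha}{2}\norm{\nabla h(x)-\nabla h(y)}^2\le h(x)-h(y)-\scal{\nabla h(y)}{x-y}$. Interchanging $x$ and $y$ and adding the two inequalities makes the function values cancel and produces exactly $\alpha\norm{\nabla h(x)-\nabla h(y)}^2\le\scal{x-y}{\nabla h(x)-\nabla h(y)}$, which is condition \ref{t:BHiii}.

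For the remaining equivalence \ref{t:BHiii} $\Leftrightarrow$ \ref{t:BHiv} I would pass to the conjugate using $(\partial h)^{-1}=\partial h^*$. Since $h$ is differentiable on all of $\HH$, for every $u\in\HH$ one has $u\in\partial h^*(\nabla h(u))$, so the points of the form $\nabla h(u)$ exhaust the relevant part of $\dom\partial h^*$. Under this correspondence, the $\alpha$-cocoercivity of $\nabla h$, namely $\scal{x-y}{\nabla h(x)-\nabla h(y)}\ge\alpha\norm{\nabla h(x)-\nabla h(y)}^2$, is literally the $\alpha$-strong monotonicity of $\partial h^*$. The implication \ref{t:BHiv} $\Rightarrow$ \ref{t:BHiii} is then immediate from \eqref{e:strongconvpart} applied to $h^*$, while \ref{t:BHiii} $\Rightarrow$ \ref{t:BHiv} requires the converse statement that $\alpha$-strong monotonicity of the subdifferential forces $\alpha$-strong convexity, which I would invoke as the standard characterization of strong convexity via strong monotonicity of the subdifferential.

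I expect two delicate points. First, in \ref{t:BHii} $\Rightarrow$ \ref{t:BHiii}, the genuinely clever ingredient is the reduction to the known minimizer of the shifted function $\phi_y$; this is where convexity of $h$ is truly exploited and where a naive Cauchy--Schwarz bound would only recover \ref{t:BHi}, not the sharp cocoercivity. One must also be careful to justify the descent inequality in infinite dimensions rather than quote it from a finite-dimensional Taylor expansion. Second, the passage \ref{t:BHiii} $\Rightarrow$ \ref{t:BHiv} hinges on the converse of \eqref{e:strongconvpart}, i.e. recovering strong convexity of $h^*$ from strong monotonicity of $\partial h^*=(\partial h)^{-1}$; this is not an elementary manipulation and rests on the maximal monotonicity of subdifferentials and the subdifferential sum rule, so it is the main analytic obstacle.
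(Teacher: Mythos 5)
This statement is the Baillon--Haddad theorem, which the paper does not prove: it is imported verbatim as \cite[Theorem~18.15]{bauschke2011:Convex_analysis}, so there is no internal proof to compare against. Judged on its own, your argument is essentially correct and follows the classical route rather than the longer chain of equivalences used in Bauschke--Combettes. The two Cauchy--Schwarz arrows \ref{t:BHi}$\Rightarrow$\ref{t:BHii} and \ref{t:BHiii}$\Rightarrow$\ref{t:BHi} are fine, and your treatment of the core implication \ref{t:BHii}$\Rightarrow$\ref{t:BHiii} via the shifted function $\phi_y=h-\scal{\nabla h(y)}{\cdot}$, the gradient step $u=x-\alpha\nabla\phi_y(x)$, and symmetrization is the standard sharp argument; for the descent inequality it is cleaner to avoid integration altogether and note that \ref{t:BHii} says exactly that $\nabla q$ is monotone for $q=\tfrac{1}{2\alpha}\|\cdot\|^2-h$, hence $q$ is convex and lies above its tangents, which is the descent inequality rearranged. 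Two small gaps are worth closing. First, in \ref{t:BHiv}$\Rightarrow$\ref{t:BHiii} you obtain $\alpha$-cocoercivity of $\partial h$ from strong monotonicity of $\partial h^*=(\partial h)^{-1}$, but statement \ref{t:BHiii} also asserts Fr\'echet differentiability of $h$; this follows because $\alpha$-cocoercivity with $\alpha>0$ and $\dom\partial h=\HH$ forces $\partial h$ to be single-valued and Lipschitz (precisely the observation the paper records just before the theorem), so you should say so explicitly. Second, as you correctly flag, \ref{t:BHiii}$\Rightarrow$\ref{t:BHiv} rests on the converse of \eqref{e:strongconvpart}, i.e.\ that $\alpha$-strong monotonicity of $\partial h^*$ implies $\alpha$-strong convexity of $h^*$; this is a genuine (if standard) result, e.g.\ \cite[Example~22.4]{bauschke2011:Convex_analysis}, and invoking it is legitimate but it is the one place where your proof is not self-contained.
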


The \textit{proximity operator} of $h$ is
\begin{align}
	\label{eq:prox_def}
	\prox_h : \Hi \to \Hi : x \mapsto \argmin_{y \in \Hi} \left(h(y) + 
	\frac{1}{2} \norm{x-y}^2\right),
\end{align}
which is characterized by
\begin{equation}
	\label{eq:prox_char}
	(\forall x \in \Hi)(\forall p \in \Hi) \quad p = \prox_h x 
	\quad\Leftrightarrow\quad x-p \in \partial h(p)
\end{equation}
and satisfies 
\begin{align}
	\label{eq:moreau_decomp}
	(\forall\gamma \in\RPP)\quad  \prox_{\gamma h} =\Id - \gamma 
	\prox_{h^*/\gamma}\circ (\Id/\gamma),
\end{align}
where $\Id\colon\Hi\to\Hi$ denotes the identity operator. The 
\textit{reflected proximity operator} of $h$ is
$$
R_h=2\prox_h-\Id.
$$

In addition, we denote by ${\rm Fix}\, T=\menge{x\in\HH}{x=Tx}$ the set of fixed points of an operator $T\colon\HH\to\HH$.
For further background on convex analysis, the reader is referred 
to \cite{bauschke2011:Convex_analysis}. 
\section{Main results}
\label{sec:mainr}
Note that, in the context of Problem~\ref{prob:main}, it follows from Theorem~\ref{t:BH} and Cauchy-Schwarz inequality that 
$\alpha\rho\in[0,1]$ and $\beta\mu\in[0,1]$. In the case when
$\alpha\rho=1$ and $\dom\partial f=\HH$, it follows from \eqref{e:strongconvpart} and 
Theorem~\ref{t:BH}\eqref{t:BHii} that, for every $x\in\HH$,
$\scal{\nabla f(x)}{x}=\rho\|x\|^2+\scal{\nabla f(0)}{x}$. Therefore,
by integrating, we deduce
\begin{equation}
	\label{e:quadcase}
	f(x)=f(0)+\scal{\nabla f(0)}{x}+\frac{\rho}{2}\|x\|^2.
\end{equation}
A similar quadratic form for $g$ is obtained if $\beta\mu=1$.
These cases will be studied separately.

\begin{definition}
	\label{def:delta}
	Given $h\in\Gamma_0(\HH)$ and $\delta\in\RR$,
	we define $h_{\delta}=h+\frac{\delta}{2}\|\cdot\|^2$.
\end{definition}

\begin{proposition}
	\label{p:strcoco}
	Let $h\in\Gamma_0(\HH)$ be a $\rho$-strongly convex function
	such that $\partial h$ is $\alpha$-cocoercive, for some $\rho\ge 0$
	and $\alpha\ge 0$ satisfying $\alpha\rho\in\left[0,1\right]$. 
	Then, the following hold:
	\begin{enumerate}
		\item\label{p:strcoco0} 
		Suppose that $\alpha\rho=1$ and $\dom\partial h=\HH$, and let $\delta\in[-\rho,+\infty[$. Then,
		there exists $a\in\RR$ and $b\in\HH$ such that
		\begin{equation}
			(h_{\delta})^*=
			\begin{cases}
				\iota_{\{b\}}-a,&\text{if}\:\:\delta=-\rho;\\
				\frac{\|\cdot-b\|^2}{2(\rho+\delta)}-a,&\text{if}\:\:\delta>-\rho
			\end{cases}
		\end{equation}
		and, for every $\eta\in\RR$,
		\begin{equation}
			(((h_{\delta})^*)_{\eta})^*=
			\begin{cases}
				\scal{b}{\cdot}+a-\frac{\eta}{2}\|b\|^2,&\text{if}\:\:\delta=-\rho;\\
				\iota_{\{-\frac{b}{\rho+\delta}\}}+a-\frac{\|b\|^2}{2(\rho+\delta)},&\text{if}\:\:\delta>-\rho\:\:\text{and}\:\:
				\eta=-\frac{1}{\rho+\delta};\\
				\frac{\|\cdot+\frac{b}{\rho+\delta}\|^2}{2(\eta+\frac{1}{\rho+\delta})}
				+a-\frac{\|b\|^2}{2(\rho+\delta)},
				&\text{if}\:\:\delta>-\rho\:\:\text{and}\:\:
				\eta>-\frac{1}{\rho+\delta};\\
				-\infty, &\text{otherwise}.
			\end{cases}
		\end{equation}
		\item\label{p:strcoco2} 
		Suppose that either $\alpha=0$ or $\alpha>0$ and $\dom\partial h=\HH$. Suppose that $\alpha\rho<1$, let 
		$\delta\in[-\rho,+\infty[$, and let
		$\eta\in[-\frac{\alpha}{1+\alpha\delta},+\infty[$. Then, the following 
		hold: 
		\begin{enumerate}
			\item\label{p:strcoco21}  $(h_{\delta})^*\in\Gamma_0(\HH)$ and it is 
			$\frac{\alpha}{1+\alpha\delta}$-strongly convex.
			\item\label{p:strcoco22} Suppose that $\delta>-\rho$. Then 
			\begin{enumerate}
				\item\label{p:strcoco22i}  $\dom (h_{\delta})^*=\HH$,
				$(h_{\delta})^*$ is Fréchet differentiable and $\nabla (h_{\delta})^*$ is
				$(\rho+\delta)$-cocoercive.
				\item\label{p:strcoco22ii} 
				$(((h_{\delta})^*)_{\eta})^*\in\Gamma_0(\HH)$ and it is 
				$\frac{\rho+\delta}{1+(\rho+\delta)\eta}$
				-strongly 
				convex. 
			\end{enumerate}
			\item\label{p:strcoco23} Suppose that $\eta>-\frac{\alpha}{1+\alpha\delta}$. Then $\dom (((h_{\delta})^*)_{\eta})^*=\HH$,
			$(((h_{\delta})^*)_{\eta})^*$ is Fréchet differentiable and $\nabla (((h_{\delta})^*)_{\eta})^*$
			is $(\frac{\alpha}{1+\alpha\delta}+\eta)$-cocoercive.
		\end{enumerate}
		\item\label{p:strcoco3}  For every 
		$\tau\in\left]\max\{-\eta,0\},+\infty\right[$, suppose that 
		$\delta(\tau+\eta)>-1$. Then
		\begin{equation}\label{eq:pstrcocov1} 
			\prox_{\tau 
				(((h_{\delta})^*)_{\eta})^*}=\frac{\eta}{\tau+\eta}\Id+\frac{\tau}{\tau+\eta}
			\prox_{\frac{\tau+\eta}{1+\delta(\tau+\eta)}h}
			\circ\left(\frac{\Id}{1+\delta(\tau+\eta)}\right)
		\end{equation}
		and 
		\begin{equation}\label{eq:pstrcocov2} 
			R_{\tau 
				(((h_{\delta})^*)_{\eta})^*}=\frac{2\tau}{\tau+\eta}
			\prox_{\frac{\tau+\eta}{1+\delta(\tau+\eta)}h}
			\circ\left(\frac{\Id}{1+\delta(\tau+\eta)}\right)
			-\frac{\tau-\eta}{\tau+\eta}\Id.
		\end{equation}
	\end{enumerate}
\end{proposition}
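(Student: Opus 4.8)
The plan is to compute $\prox_{\tau\phi}$, where I abbreviate $\phi:=(((h_{\delta})^*)_{\eta})^*$, by reducing it through a short chain of two elementary identities—Moreau's decomposition \eqref{eq:moreau_decomp} and the rule for the proximity operator of a quadratic perturbation—and then to read off the reflected operator from $R_{\tau\phi}=2\prox_{\tau\phi}-\Id$. Throughout I use that $\phi\in\Gamma_0(\HH)$, which follows from the preceding parts: by \eqref{p:strcoco21} the function $(h_{\delta})^*\in\Gamma_0(\HH)$ is $\frac{\alpha}{1+\alpha\delta}$-strongly convex, so adding $\frac{\eta}{2}\|\cdot\|^2$ keeps $\phi^*=((h_{\delta})^*)_{\eta}$ proper, lower semicontinuous and convex, whence $\phi=(\phi^*)^*\in\Gamma_0(\HH)$ with $\phi^{**}=\phi$. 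I also record the auxiliary identity, obtained by completing the square, that for $k\in\Gamma_0(\HH)$, $c\in\RR$ and $\gamma>0$ with $1+\gamma c>0$,
\begin{equation}\label{eq:quadpert}
\prox_{\gamma(k+\frac{c}{2}\|\cdot\|^2)}=\prox_{\frac{\gamma}{1+\gamma c}k}\circ\left(\frac{\Id}{1+\gamma c}\right).
\end{equation}

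First I would apply Moreau's decomposition \eqref{eq:moreau_decomp} with $\gamma=\tau>0$ to write $\prox_{\tau\phi}=\Id-\tau\,\prox_{\phi^*/\tau}\circ(\Id/\tau)$, turning the problem into evaluating $\prox_{\phi^*/\tau}$ composed with $\Id/\tau$. Since $\phi^*/\tau=\frac1\tau\bigp{(h_{\delta})^*+\frac{\eta}{2}\|\cdot\|^2}$, identity \eqref{eq:quadpert} with $k=(h_{\delta})^*$, $\gamma=1/\tau$ and $c=\eta$ applies provided $1+\eta/\tau>0$, i.e. $\tau+\eta>0$ (ensured by $\tau>\max\{-\eta,0\}$), and reduces $\prox_{\phi^*/\tau}\circ(\Id/\tau)$ to $\prox_{(h_{\delta})^*/(\tau+\eta)}\circ(\Id/(\tau+\eta))$. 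Writing $\sigma:=\tau+\eta>0$, a second application of \eqref{eq:moreau_decomp}, now with $\gamma=1/\sigma$ and using $(h_{\delta})^{**}=h_{\delta}$, expresses this through $\prox_{\sigma h_{\delta}}$. Finally, since $\sigma h_{\delta}=\sigma(h+\frac{\delta}{2}\|\cdot\|^2)$, identity \eqref{eq:quadpert} with $k=h$, $\gamma=\sigma$ and $c=\delta$ applies precisely when $1+\sigma\delta=1+\delta(\tau+\eta)>0$—the remaining hypothesis—and gives $\prox_{\sigma h_{\delta}}=\prox_{\frac{\tau+\eta}{1+\delta(\tau+\eta)}h}\circ(\Id/(1+\delta(\tau+\eta)))$, which is exactly the inner proximity operator of \eqref{eq:pstrcocov1}.

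It then remains to back-substitute and collect terms. Denoting $P:=\prox_{\frac{\tau+\eta}{1+\delta(\tau+\eta)}h}(\Id/(1+\delta(\tau+\eta)))$, the two Moreau steps give $\prox_{\tau\phi}=\Id-\frac{\tau}{\sigma}(\Id-P)=\frac{\sigma-\tau}{\sigma}\Id+\frac{\tau}{\sigma}P$, and $\sigma-\tau=\eta$, $\sigma=\tau+\eta$ yield \eqref{eq:pstrcocov1}. The reflected operator \eqref{eq:pstrcocov2} follows immediately from $R_{\tau\phi}=2\prox_{\tau\phi}-\Id$ together with the simplification $2\eta/(\tau+\eta)-1=-(\tau-\eta)/(\tau+\eta)$.

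The computation is routine once the identities are lined up; the delicate points, on which I would concentrate, are (i) checking at each of the two uses of \eqref{eq:quadpert} that the factor $1+\gamma c$ is positive, matching these requirements exactly to the stated hypotheses $\tau>\max\{-\eta,0\}$ and $\delta(\tau+\eta)>-1$, and (ii) the legitimacy of \eqref{eq:quadpert} and of the biconjugation $(h_{\delta})^{**}=h_{\delta}$ even when $\eta$ or $\delta$ is negative, so that $\frac{\eta}{2}\|\cdot\|^2$ (resp. $\frac{\delta}{2}\|\cdot\|^2$) is concave. The latter is not automatic, but is justified because the \emph{combined} functions $\phi^*=((h_{\delta})^*)_{\eta}$ and $h_{\delta}$ stay in $\Gamma_0(\HH)$ under the strong-convexity estimate \eqref{p:strcoco21} and the assumption $\delta\ge-\rho$, so the completion-of-square argument behind \eqref{eq:quadpert} remains valid. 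As a consistency check one may alternatively verify the identity directly from \eqref{eq:prox_char}, namely $p=\prox_{\tau\phi}x\Leftrightarrow x-p\in\tau\partial\phi(p)$, using $\partial\phi=((\partial h+\delta\Id)^{-1}+\eta\Id)^{-1}$, which leads to the same chain of substitutions.
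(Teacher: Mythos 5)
Your treatment of item \eqref{p:strcoco3} is correct and coincides with the paper's own proof of that item: the authors likewise chain Moreau's decomposition \eqref{eq:moreau_decomp}, the quadratic-perturbation rule for proximity operators (which they invoke as \cite[Proposition~24.8(i)]{bauschke2011:Convex_analysis} rather than re-deriving it), a second application of Moreau's identity to $h_{\delta}$, and the perturbation rule once more to pass from $h_{\delta}$ to $h$, before collecting terms and applying $R=2\prox-\Id$. Your bookkeeping of the two positivity requirements $\tau+\eta>0$ and $1+\delta(\tau+\eta)>0$, and your remark that the perturbed functions remain in $\Gamma_0(\HH)$ because of the strong convexity supplied by \eqref{p:strcoco21} and by $\delta\ge-\rho$, are exactly the points on which the paper's computation tacitly relies.

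The genuine gap is one of coverage: the proposition has three items and you prove only the third. Item \eqref{p:strcoco0} (the explicit affine/quadratic formulas for $(h_{\delta})^*$ and $(((h_{\delta})^*)_{\eta})^*$ when $\alpha\rho=1$) is not addressed at all; in the paper it follows from the identity \eqref{e:quadcase}, which shows $h_{\delta}=a+\scal{b}{\cdot}+\tfrac{\rho+\delta}{2}\|\cdot\|^2$, together with standard conjugation formulas for quadratics. More importantly, item \eqref{p:strcoco2} --- in particular \eqref{p:strcoco21}, which you explicitly use as a known fact to guarantee $\phi^*=((h_{\delta})^*)_{\eta}\in\Gamma_0(\HH)$ --- is itself part of what must be proved. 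Its proof is short but not vacuous: one writes $h_{\delta}=\bigl(h-\tfrac{\rho}{2}\|\cdot\|^2\bigr)+\tfrac{\rho+\delta}{2}\|\cdot\|^2\in\Gamma_0(\HH)$, observes that when $\alpha>0$ and $\dom\partial h=\HH$ the gradient $\nabla h_{\delta}$ is $(\alpha^{-1}+\delta)$-Lipschitz, and applies Theorem~\ref{t:BH} to convert this into $\tfrac{\alpha}{1+\alpha\delta}$-strong convexity of $(h_{\delta})^*$; items \eqref{p:strcoco22}--\eqref{p:strcoco23} then follow by applying the same equivalences to $(h_{\delta})^*$ in place of $h$. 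Your argument is not circular --- \eqref{p:strcoco21} does not depend on \eqref{p:strcoco3} --- but as a proof of the full proposition it is incomplete, and the missing items must be established before the step you carry out can legitimately rely on them.
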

\begin{proof}{Proof}
	\ref{p:strcoco0}: It follows from 
	\eqref{e:quadcase} that 
	$h_{\delta}=a+\scal{b}{\cdot}+\frac{\rho+\delta}{2}\|\cdot\|^2$ for $a=h(0)$ 
	and $b=\nabla h(0)$. The result follows from 
	\cite[Proposition~13.19 \& Proposition 
	13.23]{bauschke2011:Convex_analysis} and straightforward 
	computations.
	\ref{p:strcoco21}: Since $h\in\Gamma_0(\HH)$ is $\rho$-strongly convex, $\phi:=h-\frac{\rho}{2}\|\cdot\|^2$ is convex, thus, $h_{\delta}=\phi+(\rho+\delta)\|\cdot\|^2/2 \in \Gamma_0(\Hi)$. Now, if $\alpha=0$, the result is clear because $0$-strongly convex functions are convex.
	If $\alpha>0$ and $\dom\partial h=\HH$, it follows from Theorem~\ref{t:BH} that $h$ is Fr\'echet differentiable, $\nabla h$ is $\alpha^{-1}$-Lipschitz continuous and, therefore, $\nabla h_{\delta}$ is $(\alpha^{-1}+\delta)$-Lipschitz continuous.
	The result follows from Theorem~\ref{t:BH} now applied to $h_{\delta}$.
	\ref{p:strcoco22i}: If $\delta >-\rho$, $h_{\delta}$ is $(\rho+\delta)$-strongly convex and the result is a consequence of Theorem~\ref{t:BH}.
	\ref{p:strcoco22ii} : It follows from \ref{p:strcoco22i} and \ref{p:strcoco21} applied to $(h_{\delta})^*$.
	\ref{p:strcoco23} 
	It follows directly by combining \ref{p:strcoco21} and \ref{p:strcoco22i} with the function $(h_{\delta})^*$.
	\ref{p:strcoco3}: It follows from \eqref{eq:moreau_decomp} and 
	\cite[Proposition~24.8(i)]{bauschke2011:Convex_analysis} 
	that
	\begin{align}
		\prox_{\tau 
			(((h_{\delta})^*)_{\eta})^*}&=\Id-\tau\prox_{((h_{\delta})^*)_{\eta}/\tau}
		\circ\frac{\Id}{\tau}\nonumber\\
		&=\Id-\tau\prox_{(h_{\delta})^*/(\tau+\eta)}
		\circ\frac{\Id}{\tau+\eta}\nonumber\\
		&=\Id-\tau\left(\frac{\Id-\prox_{(\tau+\eta)h_{\delta}}}{\tau+\eta}\right)
		\nonumber\\
		&=\frac{\eta}{\tau+\eta}\Id+\frac{\tau}{\tau+\eta}
		\prox_{\frac{\tau+\eta}{1+\delta(\tau+\eta)}h}\circ\frac{\Id}{1+\delta(\tau+\eta)},
	\end{align} 
	and the result follows from $R_{\tau 
		(((h_{\delta})^*)_{\eta})^*}=2\prox_{\tau(((h_{\delta})^*)_{\eta})^*}-\Id$.
	 
\end{proof}
\begin{proposition}\label{p:solutionsets}
	In the context of Problem~\ref{prob:main}, 
	suppose that $0 \in\sri(\dom (f)-\dom (g))$,
	let 
	$\delta\in[-\rho,\mu]$, let
	$\eta\in\left[-\frac{\alpha}{1+\alpha\delta},\frac{\beta}{1-\beta\delta}\right]$, and consider the following optimization problem
	\begin{equation}
		\label{e:mainbd}
		\min_{x\in\HH}\big(((f_{\delta})^*)_{\eta}\big)^*(x)
		+\big(((g_{-\delta})^*)_{-\eta}\big)^*(x),
	\end{equation}
	where its solution set is denoted by $\widetilde{S}$. Then, the following hold:
	\begin{enumerate}
		\item \label{p:solutionsets1} If $\eta =0$, then $S=\widetilde{S}$.
		\item \label{p:solutionsets2} If $\eta>0$, then $S=\prox_{\frac{\eta}{1+\eta\delta}f}\left(\frac{1}{1+\eta\delta}\widetilde{S}\right)$.
		\item \label{p:solutionsets3} If $\eta<0$, then $S=\prox_{\frac{-\eta}{1+\eta\delta}g}\left(\frac{1}{1+\eta\delta}\widetilde{S}\right)$.
	\end{enumerate}
\end{proposition}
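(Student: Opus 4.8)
The plan is to characterize both solution sets as zeros of sums of subdifferentials and then transport one onto the other by an explicit change of variables. Write $A=\partial f$, $B=\partial g$, and set $\tilde f=\big(((f_\delta)^*)_\eta\big)^*$ and $\tilde g=\big(((g_{-\delta})^*)_{-\eta}\big)^*$. The qualification $0\in\sri(\dom f-\dom g)$ together with the subdifferential sum rule \cite{bauschke2011:Convex_analysis} gives $x\in S\iff 0\in Ax+Bx$. The first task is to compute the subdifferentials of the modified data: using $\partial k^*=(\partial k)^{-1}$, the additivity of $\partial$ under a finite quadratic perturbation, and $\partial k_\delta=\partial k+\delta\Id$, one obtains
\[
\partial\tilde f=\big[(A+\delta\Id)^{-1}+\eta\Id\big]^{-1},\qquad
\partial\tilde g=\big[(B-\delta\Id)^{-1}-\eta\Id\big]^{-1}.
\]
Before using these, I would record that the ranges $\delta\in[-\rho,\mu]$ and $\eta\in[-\tfrac{\alpha}{1+\alpha\delta},\tfrac{\beta}{1-\beta\delta}]$, via Proposition~\ref{p:strcoco}, guarantee that $((f_\delta)^*)_\eta$ and $((g_{-\delta})^*)_{-\eta}$ lie in $\Gamma_0(\HH)$, and that $1+\eta\delta>0$ (a short case split on the sign of $\delta$). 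When $\eta>0$, $((f_\delta)^*)_\eta$ is $\eta$-strongly convex, so $\dom\tilde f=\HH$ and the sum rule again holds for \eqref{e:mainbd}; the case $\eta<0$ is symmetric in $g$, and $\eta=0$ leaves the domains unchanged.

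For \ref{p:solutionsets1}, when $\eta=0$ we have $((f_\delta)^*)_0=(f_\delta)^*$, hence $\tilde f=(f_\delta)^{**}=f_\delta$ and likewise $\tilde g=g_{-\delta}$, so $\tilde f+\tilde g=f+g$ and $S=\widetilde S$.

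For \ref{p:solutionsets2}--\ref{p:solutionsets3} ($\eta\neq0$), the key observation is that, for every $\tilde x$ and $u$,
\[
u\in\partial\tilde f(\tilde x)\iff u-\delta p\in\partial f(p)\ \text{ with }\ p=\tilde x-\eta u,
\]
obtained by unfolding $\tilde x\in(A+\delta\Id)^{-1}(u)+\eta u$; the analogue for $\tilde g$ reads $v+\delta q\in\partial g(q)$ with $q=\tilde x+\eta v$. If $\tilde x\in\widetilde S$, pick $u\in\partial\tilde f(\tilde x)$ and $v\in\partial\tilde g(\tilde x)$ with $u+v=0$; then $p=\tilde x-\eta u=\tilde x+\eta v=q=:x$, and adding $u-\delta x\in\partial f(x)$ to $v+\delta x\in\partial g(x)$ gives $0\in\partial f(x)+\partial g(x)$, i.e.\ $x\in S$. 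Conversely, given $x\in S$ with $w\in\partial f(x)$ and $-w\in\partial g(x)$, set $\tilde x=(1+\eta\delta)x+\eta w$; reversing the computation shows $\tilde x\in\widetilde S$ and recovers the same $x$. Finally, eliminating $u$ from $u-\delta x\in\partial f(x)$ and $x=\tilde x-\eta u$ yields $\tfrac{\tilde x}{1+\eta\delta}-x\in\partial\big(\tfrac{\eta}{1+\eta\delta}f\big)(x)$, which by \eqref{eq:prox_char} is exactly $x=\prox_{\frac{\eta}{1+\eta\delta}f}\big(\tfrac{\tilde x}{1+\eta\delta}\big)$ when $\eta>0$; for $\eta<0$ the same elimination applied to the $g$-relation (multiplying through by $-\eta>0$) produces the formula with $\prox_{\frac{-\eta}{1+\eta\delta}g}$.

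The main obstacle I anticipate is bookkeeping rather than conceptual: one must ensure the modified functions belong to $\Gamma_0(\HH)$ and that the constraint qualification survives the transformation, so that $\widetilde S$ is genuinely the zero set of $\partial\tilde f+\partial\tilde g$; this is precisely where the parameter ranges and Proposition~\ref{p:strcoco} are indispensable, and where $1+\eta\delta>0$ (needed both for well-posedness of the prox and for dividing through) must be verified. Once the inclusion-level equivalences are established, the consistency $p=q=x$ \emph{forced} by $u+v=0$ turns the correspondence between $\widetilde S$ and $S$ into a clean bijection, giving the stated set identities.
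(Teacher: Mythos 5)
Your proposal is correct and follows essentially the same route as the paper: the change of variables $z=(1+\eta\delta)x+\eta w$ obtained by inverting subdifferentials, the sum rule under the stated qualification (and under full domain of the modified functions for \eqref{e:mainbd}), and the characterization \eqref{eq:prox_char} of the proximity operator. The only organizational difference is that by choosing $u+v=0$ at the outset you get $p=q$ for free and then identify the resulting point with the prox via its single-valuedness, whereas the paper defines $x$ through the prox first and must add a short monotonicity argument ($0\le-\eta\|u+v\|^2$) to reconcile the two subgradients; both versions close the converse inclusion correctly.
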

\begin{proof}{Proof}
	\ref{p:solutionsets1}: If $\eta = 0$, since $f_{\delta}$ and $g_{-\delta}$ are in $\Gamma_0(\HH)$, we have $((f_{\delta})^*)^*
	+((g_{-\delta})^*)^*=f_{\delta}+g_{-\delta}=f+g$ and the result follows.
	\ref{p:solutionsets2}: Let $x  \in S$. Since  $0 \in \sri (\dom (f) - \dom (g))$, we deduce from  \cite[Theorem~16.3 \& Corollary 16.48]{bauschke2011:Convex_analysis} that 
	\begin{align}
		x \in S \quad &\Leftrightarrow\quad  0 \in\partial f_{\delta}(x)+\partial g_{-\delta} (x)\nonumber\\
		&\Leftrightarrow\quad (\exists u \in \Hi) \quad  u \in \partial f_\delta (x) \quad \text{and} \quad  -u \in \partial g_{-\delta}(x)\label{e:auxsol}\\
		&\Leftrightarrow\quad (\exists u \in \Hi) \quad  x \in \partial (f_\delta)^* (u) \quad \text{and} \quad  x \in \partial (g_{-\delta})^*(-u)\nonumber\\
		&\Leftrightarrow\quad (\exists u \in \Hi) \quad  x+\eta u \in \partial ((f_\delta)^*)_{\eta} (u) \quad \text{and} \quad x+\eta u \in \partial ((g_{-\delta})^*)_{-\eta}(-u)\nonumber\\
		&\Leftrightarrow\quad (\exists u \in \Hi) \quad u \in \partial \big(((f_\delta)^*)_{\eta}\big)^* ( x+\eta u)\quad \text{and} \quad -u \in \partial \big(((g_{-\delta})^*)_{-\eta}\big)^*( x+\eta u),
	\end{align}
	which yields $z:=x+\eta u \in \widetilde{S}$ and, as a consequence, $\widetilde{S}\neq\varnothing$. Hence, it follows from \eqref{e:auxsol} and Definition~\ref{def:delta} that
	\begin{align}\label{eq:solutionsets21}
		\frac{z-x}{\eta} \in \partial f_{\delta} (x)=\partial f(x)+\delta x\quad   \Leftrightarrow \quad x =\prox_{\frac{\eta}{1+\eta\delta}f}\left(\frac{z}{1+\eta\delta}\right)
	\end{align}
	and we conclude that $S\subset\prox_{\frac{\eta}{1+\eta\delta}f}\left(\frac{1}{1+\eta\delta}\widetilde{S}\right)$. Conversely, let $z \in \widetilde{S}$ and $x=\prox_{\frac{\eta}{1+\eta\delta}f}\left(\frac{z}{1+\eta\delta}\right)$. It follows from \eqref{eq:solutionsets21} that 
	\begin{equation}
		\label{e:ufdel}
		u:= \frac{z-x}{\eta}\in\partial f_{\delta }(x).
	\end{equation} 
	Now, since $\eta >0\ge-\frac{\alpha}{1+\alpha\delta}$, Proposition~\ref{p:strcoco}.\ref{p:strcoco23} asserts that $\dom\big(((f_{\delta})^*)_{\eta}\big)^*=\Hi$ and \cite[Theorem~16.3 \& Corollary~16.48]{bauschke2011:Convex_analysis} yield
	\begin{align}
		z \in \widetilde{S}\quad &\Leftrightarrow\quad 0 \in \partial \big(((f_{\delta})^*)_{\eta}\big)^*(z) + \partial\big(((g_{-\delta})^*)_{-\eta}\big)^*(z)\nonumber\\
		&\Leftrightarrow\quad (\exists v \in \Hi) \quad -v \in \partial\big(((f_{\delta})^*)_{\eta}\big)^*(z) \quad \text{and} \quad  v \in \partial\big(((g_{-\delta})^*)_{-\eta}\big)^*(z)\nonumber\\
		&\Leftrightarrow\quad (\exists v \in \Hi) \quad z\in \partial (f_{\delta})^*(-v)-\eta v\quad \text{and} \quad z \in \partial(g_{-\delta})^*(v)-\eta v \nonumber\\
		&\Leftrightarrow\quad (\exists v \in \Hi) \quad -v\in \partial f_{\delta}(z+\eta v) \quad \text{and} \quad v \in \partial g_{-\delta}(z+\eta v)\label{e:auxsoldelta}\\
		&\Rightarrow\quad (\exists v \in \Hi) \quad 0\in \partial f(z+\eta v)+\partial g(z+\eta v)\nonumber\\
		&\Rightarrow\quad (\exists v \in \Hi) \quad 0\in \partial f(x+\eta (u+v))+\partial g(x+\eta (u+v)).
		\label{e:auxequiv}
	\end{align}
	Moreover, since $z=x+\eta u$ in view of \eqref{e:ufdel}, it follows from 
	\eqref{e:auxsoldelta} and the monotonicity of $\partial f_\delta$ that
	$$0\le \scal{u+v}{-\eta (u+v)}=-\eta\|u+ v\|^2,$$
	we conclude $u=-v$. Therefore, \eqref{e:auxequiv} implies $x \in S$. Finally, \ref{p:solutionsets3} is deduced analogously using $g_{-\delta}$ and $-\eta$ instead of $f_{\delta}$ and $\eta$.
\end{proof}
\begin{theorem}
	\label{t:main}
	In the context of Problem~\ref{prob:main},  suppose 
	that $\max\{\alpha \rho, \beta \mu\}<1$ and that $\min\{\rho+\mu,\alpha+\beta\}>0$. Let 
	$\delta\in[-\rho,\mu]$, 
	$\eta\in\left]-\frac{\alpha}{1+\alpha\delta},\frac{\beta}{1-\beta\delta}\right[$, and
	$\tau\in\left]|\eta|,+\infty\right[$ be such that 
	$\tau|\delta|<1+\delta\eta$. 
	Moreover, given $z_0\in\HH$, consider the recurrence
	\begin{equation}
		\label{e:DRthep}
		\begin{array}{l}
			\text{For}\:\:n=0,1,2,\ldots\\
			\left\lfloor
			\begin{array}{l}
				x_n=\prox_{\frac{\tau+\eta}{1+\delta(\tau+\eta)}f}
				\left(\dfrac{z_n}{1+\delta(\tau+\eta)}\right)\\[3mm]
				y_n=\frac{2\tau}{\tau+\eta} x_n-\frac{\tau-\eta}{\tau+\eta}z_n\\[2mm]
				p_n=\prox_{\frac{\tau-\eta}
					{1-\delta(\tau-\eta)}g}
				\left(\dfrac{y_n}{1-\delta(\tau-\eta)}
				\right)\\[3mm]
				z_{n+1}=z_n+\frac{2\tau}{\tau-\eta}(p_n-x_n).
			\end{array}
			\right.
		\end{array}
	\end{equation}
	Then, the following hold:
	\begin{enumerate}
		\item\label{t:main0}
		Set
		$\varphi_{\delta,\eta}:=\big(((f_{\delta})^*)_{\eta}\big)^*$ and
		$\psi_{\delta,\eta}:=\big(((g_{-\delta})^*)_{-\eta}\big)^*$.
		We have 
		\begin{equation}
			\prox_{\frac{\tau+\eta}{1+\delta(\tau+\eta)}f}\left(\frac{1}{{{1+\delta(\tau+\eta)}}}{\rm 
				Fix}(R_{\tau\psi_{\delta,\eta}}\circ R_{\tau\varphi_{\delta,\eta}})\right) = S
		\end{equation}
		and there exist a unique $z^*\in\HH$ such that ${\rm 
			Fix}(R_{\tau\psi_{\delta,\eta}}\circ R_{\tau\varphi_{\delta,\eta}})=\{z^*\}$.
		\item\label{t:maini}
		$\{z_n\}_{n\in\N}$ converges linearly to $z^*$, i.e.,
		$$(\forall n\in\N)\quad 
		\|z_{n+1}-z^*\|\le r(\tau,\eta,\delta)\|z_n-z^*\|,$$
		where $r(\tau,\eta,\delta)=r_1(\tau,\eta,\delta)
		r_2(\tau,\eta,\delta)<1$,
		\begin{equation}
			\label{e:r1}
			r_1(\tau,\eta,\delta)=
			\max\left\{\frac{(\tau-\eta)(1+\alpha\delta)-\alpha}
			{(\tau+\eta)(1+\alpha\delta)+\alpha},
			\frac{1-(\tau-\eta)(\rho+\delta)}
			{1+(\tau+\eta)(\rho+\delta)}\right\},
		\end{equation}
		and
		\begin{equation}
			\label{e:r2}
			r_2(\tau,\eta,\delta)=\max\left\{\frac{(\tau+\eta)(1-\beta\delta)-\beta}
			{(\tau-\eta)(1-\beta\delta)+\beta},
			\frac{1-(\tau+\eta)(\mu-\delta)}
			{1+(\tau-\eta)(\mu-\delta)}\right\}.
		\end{equation}
		\item\label{t:mainii} The optimal convergence rate is
		\begin{equation}
			\label{e:rstar}
			r^*=\frac{\sqrt{(1+\beta\rho)(1+\alpha\mu)}-
				\sqrt{(\alpha+\beta)(\rho+\mu)}}
			{\sqrt{(1+\beta\rho)(1+\alpha\mu)}+\sqrt{(\alpha+\beta)(\rho+\mu)}},
		\end{equation}
		which is achieved by taking
		\begin{equation}\label{eq:deltatauop}
			(\forall \delta\in[-\rho,\mu])\quad
			\begin{cases}
				\eta(\delta)=\dfrac{\beta\rho-\alpha\mu+\delta(\alpha(1+\beta\mu)
					+\beta(1+\alpha\rho))}{(\rho+\delta)(\mu-\delta)(\alpha+\beta)
					+(1+\alpha\delta)(1-\beta\delta)(\rho+\mu)}\\[5mm]
				\tau(\delta)=\dfrac{\sqrt{(\alpha+\beta)(\rho+\mu)(1+\alpha\mu)(1+\beta\rho)}}
				{(\rho+\delta)(\mu-\delta)(\alpha+\beta)
					+(1+\alpha\delta)(1-\beta\delta)(\rho+\mu)}.
			\end{cases} 
		\end{equation}
		\item\label{t:mainiii} The optimal rate $r^*$ in \eqref{e:rstar} is 
		tight, by taking 
		$\HH=\RR^2$,
		\begin{equation}
			\label{e:deffgtight}
			f\colon (x_1,x_2)\mapsto 
			\frac{\rho}{2}x_1^2+\frac{1}{2\alpha}x_2^2,\quad\text{and}\quad 
			g\colon (x_1,x_2)\mapsto 
			\frac{\mu}{2}x_1^2+\frac{1}{2\beta}x_2^2.
		\end{equation}
	\end{enumerate}
\end{theorem}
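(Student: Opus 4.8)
The plan is to recognize that the recurrence \eqref{e:DRthep} is exactly the Peaceman--Rachford iteration $z_{n+1}=R_{\tau\psi_{\delta,\eta}}R_{\tau\varphi_{\delta,\eta}}z_n$ for the modified pair $(\varphi_{\delta,\eta},\psi_{\delta,\eta})$, and then to run the classical strict-contraction analysis on this pair. First I would check that $\tau|\delta|<1+\delta\eta$ forces $1+\delta(\tau+\eta)>0$ and $1-\delta(\tau-\eta)>0$, so that the two inner proximity operators in \eqref{e:DRthep} are well defined and Proposition~\ref{p:strcoco}.\ref{p:strcoco3} applies, once to $f$ with $(\delta,\eta)$ and once to $g$ with $(-\delta,-\eta)$. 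Using \eqref{eq:pstrcocov2} for $\varphi_{\delta,\eta}$ one identifies $y_n=R_{\tau\varphi_{\delta,\eta}}z_n$, and using \eqref{eq:pstrcocov2} for $\psi_{\delta,\eta}$ together with the definition of $y_n$ one gets $z_{n+1}=R_{\tau\psi_{\delta,\eta}}y_n$, whence $z_{n+1}=R_{\tau\psi_{\delta,\eta}}R_{\tau\varphi_{\delta,\eta}}z_n$.

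For \ref{t:maini} and the uniqueness in \ref{t:main0} I would read off from Proposition~\ref{p:strcoco}.\ref{p:strcoco22ii}--\ref{p:strcoco23} that $\varphi_{\delta,\eta}$ is $\frac{\rho+\delta}{1+(\rho+\delta)\eta}$-strongly convex with $\nabla\varphi_{\delta,\eta}$ being $\big(\frac{\alpha}{1+\alpha\delta}+\eta\big)$-cocoercive, and symmetrically (applying the proposition to $g$ with $-\delta,-\eta$) that $\psi_{\delta,\eta}$ is $\frac{\mu-\delta}{1-(\mu-\delta)\eta}$-strongly convex with $\nabla\psi_{\delta,\eta}$ being $\big(\frac{\beta}{1-\beta\delta}-\eta\big)$-cocoercive; abbreviate these four moduli by $\rho_\varphi,\alpha_\varphi,\rho_\psi,\alpha_\psi$. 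Feeding them into the reflected-resolvent contraction estimate \cite[Theorem~1]{Giselsson2017IEEE} recalled in Section~\ref{sec:1} and simplifying yields exactly that $R_{\tau\varphi_{\delta,\eta}}$ is $r_1(\tau,\eta,\delta)$-Lipschitz and $R_{\tau\psi_{\delta,\eta}}$ is $r_2(\tau,\eta,\delta)$-Lipschitz, with $r_1,r_2$ as in \eqref{e:r1}--\eqref{e:r2}. The strict bounds on $\eta$ give $\alpha_\varphi>0$ and $\alpha_\psi>0$, so both cocoercivity terms are $<1$; since $\delta\in[-\rho,\mu]$ and $\rho+\mu>0$ preclude $\rho+\delta$ and $\mu-\delta$ from vanishing simultaneously, at least one of $\rho_\varphi,\rho_\psi$ is positive, and as both reflected resolvents are in any case nonexpansive this makes the corresponding factor strictly less than $1$ and hence $r_1 r_2<1$. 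Banach's theorem on the complete space $\HH$ then gives the unique fixed point $z^*$ and the estimate $\norm{z_{n+1}-z^*}\le r_1 r_2\norm{z_n-z^*}$.

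To finish \ref{t:main0} I would note that $f+g$ is $(\rho+\mu)$-strongly convex with $\rho+\mu>0$, so $S$ is a singleton. Writing $w=\prox_{\tau\varphi_{\delta,\eta}}z^*$ and $u=(z^*-w)/\tau$, the identity $R_{\tau\psi_{\delta,\eta}}R_{\tau\varphi_{\delta,\eta}}z^*=z^*$ gives $u\in\partial\varphi_{\delta,\eta}(w)$ and $-u\in\partial\psi_{\delta,\eta}(w)$; unwinding $\partial(((f_\delta)^*)_\eta)^*$ and $\partial(((g_{-\delta})^*)_{-\eta})^*$ through Definition~\ref{def:delta} and conjugacy, and using that $x^*:=\prox_{\frac{\tau+\eta}{1+\delta(\tau+\eta)}f}\big(z^*/(1+\delta(\tau+\eta))\big)$ equals $w-\eta u$ by \eqref{eq:pstrcocov1}, one obtains $u-\delta x^*\in\partial f(x^*)$ and $-u+\delta x^*\in\partial g(x^*)$. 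Adding these yields $0\in\partial f(x^*)+\partial g(x^*)\subset\partial(f+g)(x^*)$, so $x^*\in S$; since $S$ is a singleton and ${\rm Fix}(R_{\tau\psi_{\delta,\eta}}R_{\tau\varphi_{\delta,\eta}})=\{z^*\}$, this is precisely the claimed identity. (Alternatively one combines Proposition~\ref{p:solutionsets} with the standard correspondence $\widetilde S=\prox_{\tau\varphi_{\delta,\eta}}({\rm Fix})$.)

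For \ref{t:mainii} and \ref{t:mainiii} I would exploit the same internally balanced regime. Minimizing $r_1 r_2$ jointly in $(\tau,\eta)$ for fixed $\delta$: each factor is a maximum of a term increasing in $\tau$ and one decreasing in $\tau$, so the first-order conditions push the optimum to where the two arguments of each maximum coincide, i.e. where $\tau^2=\alpha_\varphi/\rho_\varphi=\alpha_\psi/\rho_\psi$; the second equality fixes $\eta(\delta)$ and the common value fixes $\tau(\delta)$, giving \eqref{eq:deltatauop}. Substituting and using the elementary identity $\frac{(1-s_1)(1-s_2)}{(1+s_1)(1+s_2)}=\frac{1-s}{1+s}$ with $s=\frac{s_1+s_2}{1+s_1 s_2}$, $s_1=\sqrt{\alpha_\varphi\rho_\varphi}$, $s_2=\sqrt{\alpha_\psi\rho_\psi}$, collapses $r_1 r_2$ to the closed form \eqref{e:rstar}, which a direct computation shows to be independent of $\delta$. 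For tightness, on the separable quadratics \eqref{e:deffgtight} the modified functions stay quadratic, so $R_{\tau\varphi_{\delta,\eta}}$ and $R_{\tau\psi_{\delta,\eta}}$ are diagonal affine maps: along the $x_1$-axis (carrying $\rho,\mu$) the composition contracts by the product of the two strong-convexity factors, and along the $x_2$-axis (carrying $1/\alpha,1/\beta$) by the product of the two cocoercivity factors. At the parameters \eqref{eq:deltatauop} the internal balancing makes both products equal to $r_1 r_2=r^*$, so the norm of the linear part of $R_{\tau\psi_{\delta,\eta}}R_{\tau\varphi_{\delta,\eta}}$ equals $r^*$ and is attained on either axis, whence \eqref{e:rstar} cannot be improved. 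I expect the algebraic core of \ref{t:mainii}---solving the balancing system, simplifying to \eqref{e:rstar}, verifying its $\delta$-independence, and confirming that this critical point is the global minimizer over the admissible set---to be the main obstacle.
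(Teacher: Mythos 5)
Your proposal follows essentially the same route as the paper's proof: identify \eqref{e:DRthep} as PRS applied to $(\varphi_{\delta,\eta},\psi_{\delta,\eta})$ via Proposition~\ref{p:strcoco}\eqref{p:strcoco3}, obtain the contraction factors $r_1,r_2$ from the strong-convexity/cocoercivity moduli and \cite[Theorem~1]{Giselsson2017IEEE}, characterize the fixed point by unwinding subdifferentials, balance $\tau$ and $\eta$ by equating the arguments of each maximum, verify the $\delta$-independence of the resulting rate by direct algebra (the paper's Appendix), and check tightness on the separable quadratics. The only differences are cosmetic (your bookkeeping for the fixed-point argument and the Möbius-product identity used to collapse $r_1r_2$), so the proposal is correct and matches the paper's argument.
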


\begin{proof}{Proof}
	
	\ref{t:main0}:
	We assert that 
	\begin{equation}
		\label{e:Rcontractive}
		R_{\tau\varphi_{\delta,\eta}}\circ R_{\tau\psi_{\delta,\eta}}\quad\text{is $r(\tau,\eta,\delta)$-Lipschitz continuous with\: $r(\tau,\eta,\delta)<1$}.
	\end{equation}
	Hence, the Banach--Picard Theorem \cite[Theorem~1.50]{bauschke2011:Convex_analysis} yields ${\rm 
		Fix}(R_{\tau\psi_{\delta,\eta}}\circ R_{\tau\varphi_{\delta,\eta}})=\{z^*\}$ for some $z^*\in\HH$.
	Indeed, first suppose that $\delta\in]-\rho,\mu[$.
	Since $\rho+\mu>0$, it follows from Proposition~\ref{p:strcoco}\eqref{p:strcoco22} that $\varphi_{\delta,\eta}$ is $\frac{\rho+\delta}{1+(\rho+\delta)\eta}$-strongly convex and 
	$\psi_{\delta,\eta}$ is $\frac{\mu-\delta}{1-(\mu-\delta)\eta}$-strongly convex. Moreover, since 
	$\eta\in\left]-\frac{\alpha}{1+\alpha\delta},\frac{\beta}{1-\beta\delta}\right[$ and $\alpha+\beta>0$,
	Proposition~\ref{p:strcoco}\eqref{p:strcoco23} asserts that 
	$\varphi_{\delta,\eta}$ and $\psi_{\delta,\eta}$ are Fréchet differentiable with full domain and their gradients are $(\frac{\alpha}{1+\alpha\delta}+\eta)$- and $(\frac{\beta}{1-\beta\delta}-\eta)$-cocoercive, respectively. Therefore, since \eqref{e:r1} and \eqref{e:r2} can be written equivalently as
	\begin{multline}
		r_1(\tau,\eta,\delta)=\max\left\{\frac{\frac{\tau}{
				\frac{\alpha}{1+\alpha\delta}+\eta}-1}
		{\frac{\tau}{\frac{\alpha}{1+\alpha\delta}+\eta}+1},
		\frac{1-\frac{\tau(\rho+\delta)}{1+(\rho+\delta)\eta}}
		{1+\frac{\tau(\rho+\delta)}{1+(\rho+\delta)\eta}}\right\}\quad\text{and}\\
		r_2(\tau,\eta,\delta)=\max\left\{\frac{\frac{\tau}{
				\frac{\beta}{1-\beta\delta}-\eta}-1}
		{\frac{\tau}{
				\frac{\beta}{1-\beta\delta}-\eta}
			+1},
		\frac{1-\frac{\tau(\mu-\delta)}{1-(\mu-\delta)\eta}}
		{1+\frac{\tau(\mu-\delta)}{1-(\mu-\delta)\eta}}\right\},
	\end{multline}
	it follows from \cite[Theorem~1]{Giselsson2017IEEE} that $R_{\tau\varphi_{\delta,\eta}}$ and $R_{\tau\psi_{\delta,\eta}}$ are $r_1(\tau,\eta,\delta)$- and $r_2(\tau,\eta,\delta)$-Lipschitz continuous, with 
	$r_1(\tau,\eta,\delta)<1$ and $r_2(\tau,\eta,\delta)<1$ and \eqref{e:Rcontractive} holds. 
	In the case when $\delta=-\rho$, $R_{\tau\varphi_{\delta,\eta}}$ is merely nonexpansive in view of Proposition~\ref{p:strcoco}\eqref{p:strcoco2} and 
	\cite[Corollary~23.11]{bauschke2011:Convex_analysis}. However, as before,
	$R_{\tau\psi_{\delta,\eta}}$ is $r_2(\tau,\eta,\delta)$-Lipschitz continuous with $r_2(\tau,\eta,\delta)<1$, leading to the same conclusion. The case $\delta=\mu$ is analogous.
	
	Now set $x^* = \prox_{\frac{\tau+\eta}{1+\delta(\tau+\eta)}f}\left(\frac{z^*}{1+\delta(\tau+\eta)}\right)$, $y^* = R_{\tau\varphi_{\delta,\eta}} z^*$, and $p^* = \prox_{\frac{\tau-\eta}{1-\delta(\tau-\eta)}g}\left(\frac{y^*}{1-\delta(\tau-\eta)}\right)$. Then, it follows from Proposition~\ref{p:strcoco}\eqref{p:strcoco3} that
	\begin{equation}
		\label{e:auxystar}
		y^*=\frac{2\tau}{\tau+\eta} x^* - \frac{\tau-\eta}{\tau+\eta} z^*, 
	\end{equation}
	which yields
	\begin{align*}
		z^* = R_{\tau\psi_{\delta,\eta}}\circ R_{\tau\varphi_{\delta,\eta}} z^* \quad&\Leftrightarrow\quad z^* = \frac{2\tau}{\tau-\eta} p^* - \frac{\tau+\eta}{\tau-\eta} y^*\\
		&\Leftrightarrow\quad z^* = \frac{2\tau}{\tau-\eta} p^* - \frac{\tau+\eta}{\tau-\eta} \left( \frac{2\tau}{\tau+\eta} x^* - \frac{\tau-\eta}{\tau+\eta} z^* \right)\\
		& \Leftrightarrow\quad x^* = p^*.
	\end{align*}
	Hence, since
	\begin{equation*}
		\frac{z^*-(1+\delta(\tau+\eta))x^*}{\tau+\eta}\in \partial f (x^*) \quad \textnormal{ and } \quad \frac{y^*-(1-\delta(\tau-\eta))x^*}{\tau-\eta}\in \partial g (p^*)=\partial g (x^*)
	\end{equation*}
	and, in view of \eqref{e:auxystar},
	\begin{align*}
		\frac{z^*-(1+\delta(\tau+\eta))x^*}{\tau+\eta} + \frac{y^*-(1-\delta(\tau-\eta))x^*}{\tau-\eta}&=\frac{1}{\tau-\eta}\left(\frac{\tau-\eta}{\tau+\eta}z^*+ y^*  -\frac{2\tau}{\tau+\eta} x^*\right)
		= 0.
	\end{align*}
	Therefore, we conclude that $0 \in \partial f (x^*)+\partial g (x^*)$ and $x^*\in S$ in view of \cite[Theorem~16.3]{bauschke2011:Convex_analysis}. The result follows from the strict convexity of $f+g$ and \cite[Corollary~11.9]{bauschke2011:Convex_analysis}.
	

	\ref{t:maini}:
	It follows from 
	Proposition~\ref{p:strcoco}\eqref{p:strcoco3} that 
	\begin{equation}
		R_{\tau 
			\varphi_{\delta,\eta}}=
		\frac{2\tau}{\tau+\eta}\prox_{\frac{\tau+\eta}{1+\delta(\tau+\eta)}f}
		\circ\left(\frac{\Id}{1+\delta(\tau+\eta)}\right)-\frac{\tau-\eta}{\tau+\eta}\Id
	\end{equation}
	and 
	\begin{equation}
		R_{\tau 
			\psi_{\delta,\eta}}=
		\frac{2\tau}{\tau-\eta}\prox_{\frac{\tau-\eta}{1-\delta(\tau-\eta)}g}
		\circ\left(\frac{\Id}{1-\delta(\tau-\eta)}\right)-\frac{\tau+\eta}{\tau-\eta}\Id
	\end{equation}
	Hence, it follows from \eqref{e:DRthep} that, for every $n\in\N$,
	$y_n=
	R_{\tau \varphi_{\delta,\eta}}z_n$ and
	\begin{align}
		\label{e:PRS}
		(\forall n\in\N)\quad z_{n+1}
		&=z_n+\frac{2\tau}{\tau-\eta}\left[\prox_{\frac{\tau-\eta}
			{1-\delta(\tau-\eta)}g}
		\left(\frac{y_n}{1-\delta(\tau-\eta)}
		\right)-\prox_{\frac{\tau+\eta}{1+\delta(\tau+\eta)}f}
		\left(\frac{z_n}{1+\delta(\tau+\eta)}\right)\right]\nonumber\\
		&=\frac{2\tau}{\tau-\eta}\prox_{\frac{\tau-\eta}
			{1-\delta(\tau-\eta)}g}\left(\frac{y_n}{1-\delta(\tau-\eta)}
		\right)-\frac{\tau+\eta}{\tau-\eta}y_n\nonumber\\
		&=R_{\tau \psi_{\delta,\eta}}R_{\tau \varphi_{\delta,\eta}}z_n.
	\end{align}
	Then, the linear convergence follows from 
	\eqref{e:Rcontractive}.
	
	\ref{t:mainii}: Note that, given $\delta\in]-\rho,\mu[$ and 
	$\eta\in\left]-\frac{\alpha}{1+\alpha\delta},\frac{\beta}
	{1-\beta\delta}\right[$,
	by defining 
	\begin{equation}
		\phi_1\colon\tau\mapsto 
		\frac{(\tau-\eta)(1+\alpha\delta)-\alpha}
		{(\tau+\eta)(1+\alpha\delta)+\alpha}\quad\text{and}
		\quad\phi_2\colon\tau\mapsto 
		\frac{1-(\tau-\eta)(\rho+\delta)}
		{1+(\tau+\eta)(\rho+\delta)},
	\end{equation}
	which are strictly increasing and strictly decreasing, respectively,
	$r_1(\cdot,\eta,\delta)$ is minimized at 
	$\tau_1(\eta,\delta)>0$ 
	satisfying
	$\phi_1(\tau_1(\eta,\delta))=\phi_2(\tau_1(\eta,\delta))$ or, equivalently,
	\begin{align}
		\label{e:tauhat}
		\tau_1(\eta,\delta)=\sqrt{\frac{\alpha}{1+\alpha\delta}+\eta}
		\sqrt{\frac{1}{\rho+\delta}+\eta}.
	\end{align}
	We deduce 
	\begin{equation}
		\label{e:r_1hat}
		r_1(\tau_1(\eta,\delta),\eta,\delta)=
		\frac{\sqrt{\frac{\frac{1}{\rho+\delta}+\eta}
				{\frac{\alpha}{1+\alpha\delta}+\eta}}-1}{\sqrt{\frac{\frac{1}
					{\rho+\delta}+\eta}
				{\frac{\alpha}{1+\alpha\delta}+\eta}}+1}.
	\end{equation}
	Analogously, $r_2(\cdot,\eta,\delta)$ is minimized at 
	$\tau_2(\eta,\delta)>0$ satisfying 
	\begin{align}
		\label{e:taustar}
		\tau_2(\eta,\delta)=\sqrt{\frac{\beta}{1-\beta\delta}-\eta}
		\sqrt{\frac{1}{\mu-\delta}-\eta}
	\end{align}
	and
	\begin{equation}
		r_2(\tau_2(\eta,\delta),\eta,\delta)=
		\frac{\sqrt{\frac{\frac{1}{\mu-\delta}-\eta}
				{\frac{\beta}{1-\beta\delta}-\eta}}-1}{\sqrt{\frac{\frac{1}
					{\mu-\delta}-\eta}
				{\frac{\beta}{1-\beta\delta}-\eta}}+1}.
	\end{equation}
	Therefore, $r(\cdot,\eta,\delta)$ is minimized at 
	$\tau_1(\eta,\delta)=\tau_2(\eta,\delta)$, which is equivalent 
	to
	\begin{equation}
		\label{e:eta}
		\eta=
		\frac{\frac{\beta}{(1-\beta\delta)(\mu-\delta)}-\frac{\alpha}
			{(1+\alpha\delta)(\rho+\delta)}}{\frac{\alpha}{1+\alpha\delta}+\frac{1}{\rho+\delta}+
			\frac{\beta}{1-\beta\delta}+\frac{1}{\mu-\delta}}=:\eta(\delta)
	\end{equation}
	and 
	\begin{equation}
		\label{e:red}
		r(\tau_1(\eta,\delta),\eta,\delta)=
		\left(\frac{\sqrt{\frac{\frac{1}{\rho+\delta}+\eta}
				{\frac{\alpha}{1+\alpha\delta}+\eta}}-1}{\sqrt{\frac{\frac{1}
					{\rho+\delta}+\eta}
				{\frac{\alpha}{1+\alpha\delta}+\eta}}+1}\right)\left(\frac{\sqrt{\frac{\frac{1}{\mu-\delta}-\eta}
				{\frac{\beta}{1-\beta\delta}-\eta}}-1}{\sqrt{\frac{\frac{1}
					{\mu-\delta}-\eta}
				{\frac{\beta}{1-\beta\delta}-\eta}}+1}\right).
	\end{equation}
	By replacing \eqref{e:eta} in \eqref{e:red}, we obtain
	\begin{equation}
		\label{e:rd}
		r(\delta)=
		\left(
		\frac{\sqrt{\frac{(1+\beta\rho)(\rho+\mu)}{(1+\alpha\mu)(\alpha+\beta)}}
			\left(\frac{1+\alpha\delta}{\rho+\delta}\right)-1}
		{\sqrt{\frac{(1+\beta\rho)(\rho+\mu)}{(1+\alpha\mu)(\alpha+\beta)}}
			\left(\frac{1+\alpha\delta}{\rho+\delta}\right)+1}
		\right)
		\left(
		\frac{\sqrt{\frac{(1+\alpha\mu)(\rho+\mu)}{(1+\beta\rho)(\alpha+\beta)}}
			\left(\frac{1-\beta\delta}{\mu-\delta}\right)-1}
		{\sqrt{\frac{(1+\alpha\mu)(\rho+\mu)}{(1+\beta\rho)(\alpha+\beta)}}
			\left(\frac{1-\beta\delta}{\mu-\delta}\right)+1}
		\right),
	\end{equation} 
	which turns out to be constant (see Appendix), more precisely,
	\begin{equation}
		(\forall \delta\in]-\rho,\mu[)\quad 
		r(\delta)=r^*=\frac{\sqrt{(1+\beta\rho)(1+\alpha\mu)}-
			\sqrt{(\alpha+\beta)(\rho+\mu)}}
		{\sqrt{(1+\beta\rho)(1+\alpha\mu)}+\sqrt{(\alpha+\beta)(\rho+\mu)}}.
	\end{equation}
	Moreover, by using \eqref{e:eta} in \eqref{e:taustar}, we deduce
	$\tau_2(\eta(\delta),\delta)=\tau(\delta)$ defined in \eqref{eq:deltatauop}.
	
	On the other hand, if $\delta=\mu$, since $r_2(\tau,\eta,\mu)=1$, the 
	argument is simpler and leads to the same constant. In this case it is 
	only necessary to minimize $r_1$, which leads to an optimal 
	$\tau_1(\eta,\mu)$ defined in \eqref{e:tauhat} and
	an optimal value $r_1(\tau_1(\eta,\mu),\eta,\mu)$ defined in 
	\eqref{e:r_1hat}. Since $\eta\mapsto 
	r_1(\tau_1(\eta,\mu),\eta,\mu)$ is decreasing,
	its minimal value is obtained when 
	$\eta_1=\frac{\beta}{1-\beta\mu}$, which yields
	$r^*=r_1(\tau_1(\eta_1,\mu),\eta_1,\mu)$
	and that 
	\begin{equation}
		\tau_1(\eta_1,\mu)=\frac{1}{1-\beta\mu}
		\sqrt{\frac{(1+\beta\rho)(\alpha+\beta)}{(1+\alpha\mu)(\rho+\mu)}}=\tau(\mu).
	\end{equation}
	The case $\delta=-\rho$ leads to $r_1(\tau,\eta,\mu)=1$ and the 
	argument is analogous.
	
	\ref{t:mainiii}: It follows from \eqref{e:deffgtight} and 
	straightforward computations that
	\begin{equation}
		(\forall \gamma>0)\quad 
		\begin{cases}
			\prox_{\gamma f}\colon (x_1,x_2)\mapsto 
			\left(\frac{x_1}{1+\gamma\rho},\frac{x_2}{1+\gamma/\alpha}
			\right),\\
			\prox_{\gamma g}\colon 
			(x_1,x_2)\mapsto 
			\left(\frac{x_1}{1+\gamma\mu},\frac{x_2}{1+\gamma/\beta}
			\right).
		\end{cases}
	\end{equation}
	Then, let $\delta\in[-\rho,\mu]$ and let 
	$\eta=\eta(\delta)$ and
	$\tau=\tau(\delta)$ defined in \eqref{eq:deltatauop},
	it follows from Proposition~\ref{p:strcoco}\eqref{p:strcoco3} that 
	\begin{equation}
		\begin{cases}
			R_{\tau \varphi_{\delta,\eta}}\colon (x_1,x_2)\mapsto 
			\left(\left(\frac{1+(\eta-\tau)(\rho+\delta)}{1+(\eta+\tau)(\rho+\delta)}
			\right)x_1,-\left(\frac{(\tau-\eta)(1+\alpha\delta)-\alpha}
			{(\tau+\eta)(1+\alpha\delta)+\alpha}\right)x_2\right),\\[3mm]
			R_{\tau \psi_{\delta,\eta}}\colon 
			(x_1,x_2)\mapsto 
			\left(\left(\frac{1-(\tau+\eta)(\mu-\delta)}{1+(\tau-\eta)(\mu-\delta)}
			\right)x_1,-\left(\frac{(\tau+\eta)(1-\beta\delta)-\beta}
			{(\tau-\eta)(1-\beta\delta)+\beta}\right)x_2\right).
		\end{cases}
	\end{equation}
	Hence, by recalling that \eqref{e:DRthep} is equivalent to 
	\eqref{e:PRS} and that $\tau=\tau_1(\eta,\delta)$ is chosen for guaranteeing that $\phi_1( \tau_1(\eta,\delta))=\phi_2(\tau_1(\eta,\delta))$ and $\eta$ for obtaining $\tau=\tau_1(\eta,\delta)=\tau_2(\eta,\delta)$ (see \eqref{e:tauhat} and \eqref{e:taustar}), we deduce 
	\begin{equation}
		(\forall n\in\N)\quad 
		\begin{cases}
			z_{n+1}^1=\left(\frac{1-(\tau-\eta)(\rho+\delta)}{1+(\tau+\eta)(\rho+\delta)}
			\right)\left(\frac{1-(\tau+\eta)(\mu-\delta)}{1+(\tau-\eta)(\mu-\delta)}
			\right)z_n^1=r^*z_{n}^1\\
			z_{n+1}^2=\left(\frac{(\tau-\eta)(1+\alpha\delta)-\alpha}
			{(\tau+\eta)(1+\alpha\delta)+\alpha}\right)\left(\frac{(\tau+\eta)(1-\beta\delta)-\beta}
			{(\tau-\eta)(1-\beta\delta)+\beta}\right)z_n^2=r^*z_{n}^2.
		\end{cases}
	\end{equation}
	Moreover, since $z^*=(0,0)$, we deduce that 
	$\|z_{n+1}-z^*\|=r^*\|z_n-z^*\|$
	and the result follows.
	 
\end{proof}

\begin{remark}\label{rem:rates}
	\begin{enumerate}
		\item\label{rem:rates1} Note that the optimal rate obtained in Theorem~\ref{t:main} 
		is lower than the two possible rates derived from 
		\cite[Proposition~3]{Giselsson2017IEEE}. Indeed,
		noting that $x\mapsto \frac{1-x}{1+x}$ is strictly decreasing
		and 
		$\frac{(\alpha+\beta)(\rho+\mu)}{(1+\alpha\mu)(1+\beta\rho)}\ge 
		\max\{\alpha\rho,\beta\mu\}$, we have
		\begin{equation}
			r^*=\frac{1-\sqrt{\frac{(\alpha+\beta)(\rho+\mu)}{(1+\alpha\mu)
						(1+\beta\rho)}}}
			{1+\sqrt{\frac{(\alpha+\beta)(\rho+\mu)}{(1+\alpha\mu)
						(1+\beta\rho)}}}\le 
			\min\left\{\frac{1-\sqrt{\alpha\rho}}{1+\sqrt{\alpha\rho}},
			\frac{1-\sqrt{\beta\mu}}{1+\sqrt{\beta\mu}}\right\}
		\end{equation}
		and the inequality is strict because $\max\{\alpha\rho,\beta\mu\}<1$ and 
		$\min\{\rho+\mu,\alpha+\beta\}>0$.
		
		\item\label{rem:rates2} The optimal rate obtained in Theorem~\ref{t:main} is lower than the rate of convergence obtained for FISTA in \cite[Corollary~4.1]{Briceno2025Fista}. Indeed, since, $\frac{(\alpha+\beta)(\rho+\mu)}{(1+\alpha\mu)(1+\beta\rho)}\ge 
		\max\left\{\frac{\alpha(\rho+\mu)}{1+\alpha\mu},\frac{\beta(\rho+\mu)}{1+\beta\rho}\right\}$, we deduce
		\begin{align*}
			r^*\le 
			\min\left\{\frac{1-\sqrt{\frac{\alpha(\rho+\mu)}{1+\alpha\mu}}}{1+\sqrt{\frac{\alpha(\rho+\mu)}{1+\alpha\mu}}},
			\frac{1-\sqrt{\frac{\beta(\rho+\mu)}{1+\beta\rho}}}{1+\sqrt{\frac{\beta(\rho+\mu)}{1+\beta\rho}}}\right\}
			\leq
			\min\left\{{1-\sqrt{\frac{\alpha(\rho+\mu)}{1+\alpha\mu}}},
			{1-\sqrt{\frac{\beta(\rho+\mu)}{1+\beta\rho}}}\right\}
		\end{align*}
		and the inequality is strict because $\max\{\alpha\rho,\beta\mu\}<1$ and 
		$\min\{\rho+\mu,\alpha+\beta\}>0$.
		
		\item In the particular case when $\rho>0$, $\beta>0$, and 
		$\alpha=\mu=0$, the optimal rate obtained in Theorem~\ref{t:main}
		reduces to 
		\begin{equation}
			r^*=\frac{\sqrt{1+\beta\rho}-\sqrt{\beta\rho}}
			{\sqrt{1+\beta\rho}+\sqrt{\beta\rho}}
		\end{equation}
		which is strictly lower than the bound obtained in 
		\cite[Theorem~5.6]{Giselsson2017}, when $B=\partial g$ is 
		$\beta$-cocoercive and $A=\partial f$ is $\rho$-strongly 
		monotone, which is $\frac{1}{1+\sqrt{\beta\rho}}$. Hence, $r^*$ is 
		the best convergence rate available in 
		the literature for Peaceman--Rachford for minimizing 
		Problem~\ref{prob:main} when $f$ is $\rho$-strongly 
		convex and $\nabla g$ is $\beta^{-1}$-Lipschitz continuous.
		
		\item 
		A simple choice of parameters achieving the optimal convergence 
		rate is to consider 
		\begin{equation}
			\delta^*=\frac{\alpha\mu-\beta\rho}
			{\beta(1+\alpha\mu)+\alpha(1+\beta\rho)}\in\left]-\rho,\mu\right[,
		\end{equation}
		which leads $\eta^*=0$ in \eqref{e:eta}
		and, from \eqref{e:tauhat} (or \eqref{e:taustar})
		\begin{equation}
			\tau^*=\frac{\beta(1+\alpha\mu)+\alpha(1+\beta\rho)}
			{\sqrt{(\alpha+\beta)(\rho+\mu)(1+\alpha\mu)(1+\beta\rho)}}.
		\end{equation}
		In this case, the algorithm in \eqref{e:DRthep}  
		reduces to 
		\begin{equation}
			\label{e:DRthepred}
			(\forall n\in\N)\quad 
			\begin{cases}
				x_n=\prox_{\frac{\tau^*}{1+\delta^*\tau^*}f}
				\left(\frac{z_n}{1+\delta^*\tau^*}\right)\\
				y_n=2x_n- z_n\\
				p_n=\prox_{\frac{\tau^*}
					{1-\delta^*\tau^*}g}
				\left(\frac{y_n}{1-\delta^*\tau^*}
				\right)\\
				z_{n+1}=z_n+2(p_n-x_n).
			\end{cases}
		\end{equation}
	\end{enumerate}
\end{remark}

\section{Numerical experiments}
\label{sec:4}
In this section, we present numerical experiments to illustrate the advantages of our approach. First, we introduce an academic example and then we provide some applications to image processing.
\subsection{Academic Example}
Consider Problem~\ref{prob:main} in the case when $\HH=\RR^m$, 
\begin{equation}
	f\colon x\mapsto \frac{1}{2}\|Ax-a\|^2,\quad \text{and}\quad 
	g\colon x\mapsto \frac{1}{2}\|Bx-b\|^2,
\end{equation}
where $A$ and $B$ are non-zero $n\times m$ and $p\times m$ real 
matrices, respectively, $a\in\RR^n$, and $b\in\RR^p$.
Note that, since $\nabla f\colon x\mapsto A^{\top}(Ax-a)$ and $\nabla g\colon x\mapsto B^{\top}(Bx-b)$ are $\|A\|^2$- and $\|B\|^2$-Lipschitz continuous, 
then $\alpha=1/\|A\|^2>0$ and $\beta=1/\|B\|^2>0$ in view of Theorem~\ref{t:BH}. The strong convexity of $f$ (resp. $g$) with $\rho>0$ (resp. $\mu>0$) is guaranteed when $A$ (resp. $B$) is injective, which is only possible if $m\le n$ (resp. $m\le p$). In this setting, the strong convexity parameter of $f$ and $g$ corresponds to $\lambda_{\min}(A^{\top}A)$ and $\lambda_{\min}(B^{\top}B)$\footnote{Given a symmetric matrix $D$, $\lambda_{\min}(D)$ denotes the lowest real eigenvalue of $D$.}, respectively. 

In the numerical experiment below, we consider $10$ different dimension configurations for $(m,n,p)$ detailed in Table~\ref{tab:ae1} and, for each one, we generate $30$
random instances for matrices $A$ and $B$, generated in Julia by considering $A=0.5\cdot{\rm rand}(m,n)$
and $B=15\cdot{\rm rand}(m,p)$, where the function ${\rm rand}(m,n)$ generates a dense $m\times n$ matrix with independent random numbers uniformly distributed in $]0,1[$. We first compare with the average number of iterations and computational time to achieve $\|z_k-z^*\|\le tol=10^{-10}$ for the classical PRS when either $f$ or $g$ are strongly convex by using the optimal step-size derived in  
\cite[Proposition~3]{Giselsson2017IEEE}. We call PRS1 to PRS with optimal step-size when $f$ is strongly convex and PRS2 when $g$ is. Note that in the cases when $m>n$ (resp. $m>p$), PRS1 (resp. PRS2) cannot be implemented because $\rho=0$ (resp. $\mu=0$), in view of \cite[Proposition~3]{Giselsson2017IEEE}. In Table~\ref{tab:ae1} we observe an important reduction in the number of iterations and computational time except for the dimensions $(20,20,10)$ and $(40,40,20)$, in which the performances of PRS lev and PRS1 are very similar. In this case, note that the strong convexity of the problem is not relevant and the average values of the parameters lead to $r^*=0.990604$, which is very close to $1$ and the optimal rate in 
\cite[Proposition~3]{Giselsson2017IEEE} is $0.990615$. When the strong convexity is more important we can observe a reduction in the average of iterations up to $96.5\%$. In Figure~\ref{fig:cinco-subfiguras-iguales} we show a plot of the error versus iterations for a choice of particular instances in each dimension.

\begin{table}[h!]
	\centering
	\setlength{\tabcolsep}{2.5pt}
	\begin{tabular}{ccccccccccc}
		\multicolumn{5}{c}{ }& \multicolumn{2}{c}{\bf PRS lev} & \multicolumn{2}{c}{\bf PRS 1}& \multicolumn{2}{c}{\bf PRS 2}\\
		\textbf{$(m,n,p)$} & \textbf{avg $\rho$} & 
		\textbf{avg $\alpha$} & 
		\textbf{avg $\mu$} & \textbf{avg $\beta$} & Iter. & Time (ms) & Iter. & Time (ms) & Iter. & Time (ms)  \\
		\hline
		(20, 10, 20) & 0.0 & 0.077 & 0.883 & 4.2e-5 & 91.4 
		& 1.14  &  -  & - & 1396.9  &20.4  \\
		(20, 20, 10) & 5.7e-4 & 0.039 & 0.0 & 8.4e-5 & 
		1875.4  &25.7 & 1874.0  &25.7 & -  &- \\
		(20, 20, 20) & 5.7e-4 & 0.039 & 0.883 & 4.2e-5 
		& 130.5  & 1.85 & 5179.8  & 71.0 & 1029.0  & 13.6  \\
		(20, 40, 20) & 0.105 & 0.020 & 0.883 & 4.2e-5 & 
		107.2  & 1.48 & 252.8  & 3.43  &  688.0  & 9.5 \\
		(20, 20, 40) & 5.7e-4 & 0.039 & 87.3 & 2.2e-5 & 
		8.6  & 0.19  & 5317.5  & 71.8 & 237.1  & 3.5 \\\hline
		(40, 20, 40) & 0.0 & 0.020 & 0.264 & 1.1e-5 & 745.3 
		& 34.3 & -  & -  & 2093.3  & 78.07 \\
		(40, 40, 20) & 3.7e-4 & 9.8e-3 & 0.0 & 2.2e-5 & 
		3981.2  & 145.2 & 3978.6  & 144.7 & -  & -  \\
		(40, 40, 40) & 3.7e-4 & 9.8e-3 & 0.264 & 1.1e-5 
		& 893.2  & 34.3  & 7683.7  & 289.7  & 1119.6  & 42.1 \\
		(40, 80, 40) & 0.189 & 4.9e-3 & 0.264 & 1.1e-5 & 
		278.7  & 11.46 & 377.5  & 15.6  & 634.7  & 26.4 \\
		(40, 40, 80) & 3.7e-4 & 9.8e-3 & 153.9 & 5.5e-6 & 
		11.6  & 0.91 & 7856.5  & 515.6  & 340.1  & 19.2 \\\hline
	\end{tabular}
	\caption{Average parameters, number of iterations and 
		computational time (ms) for PRS lev, PRS 1, and  PRS 2 to reach 
		a tolerance of $\|z_k-z^*\|<10^{-10}$ in 30 random 
		instances of $A$ and $B$ with 
		$a,b=0$, $tol=10^{-10}$.}
	\label{tab:ae1}
\end{table}

\begin{figure}[ht]
	\centering
	\begin{subfigure}[t]{0.5\textwidth}
		\centering
		\includegraphics[width=\linewidth]{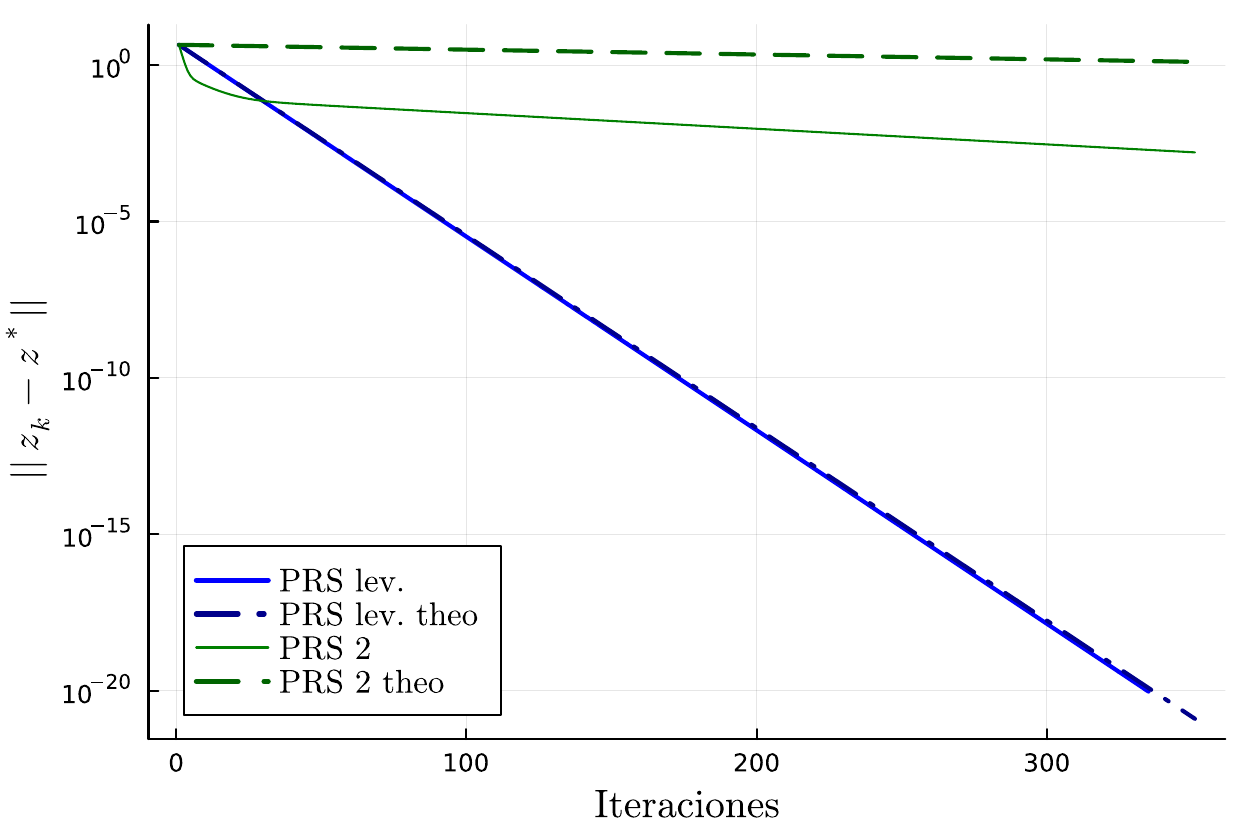}
		\caption{$(m,n,p)=(20,10,20)$}
		\label{fig:a}
	\end{subfigure}\hfill
	\begin{subfigure}[t]{0.5\textwidth}
		\centering
		\includegraphics[width=\linewidth]{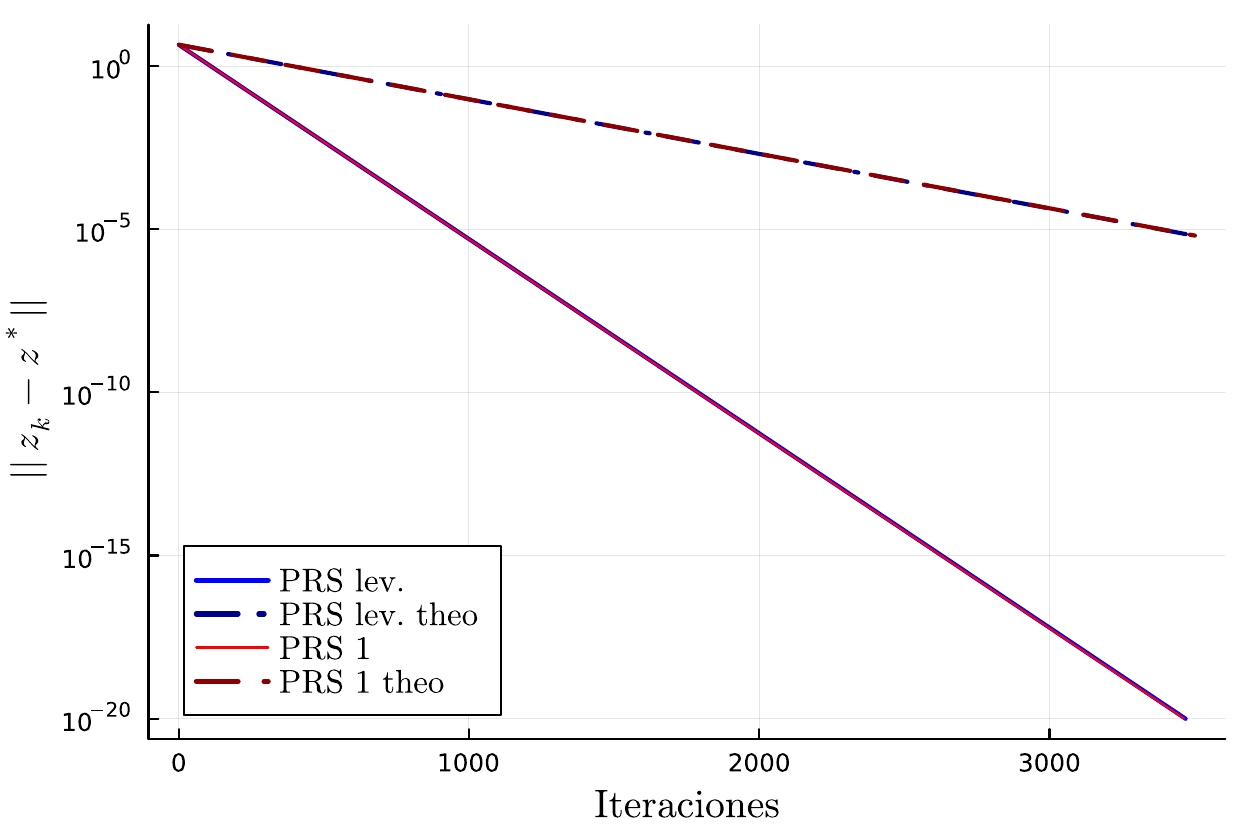}
		\caption{$(m,n,p)=(20,20,10)$}
		\label{fig:b}
	\end{subfigure}
	
	\vspace{0.8em}
	
	\begin{subfigure}[t]{0.5\textwidth}
		\centering
		\includegraphics[width=\linewidth]{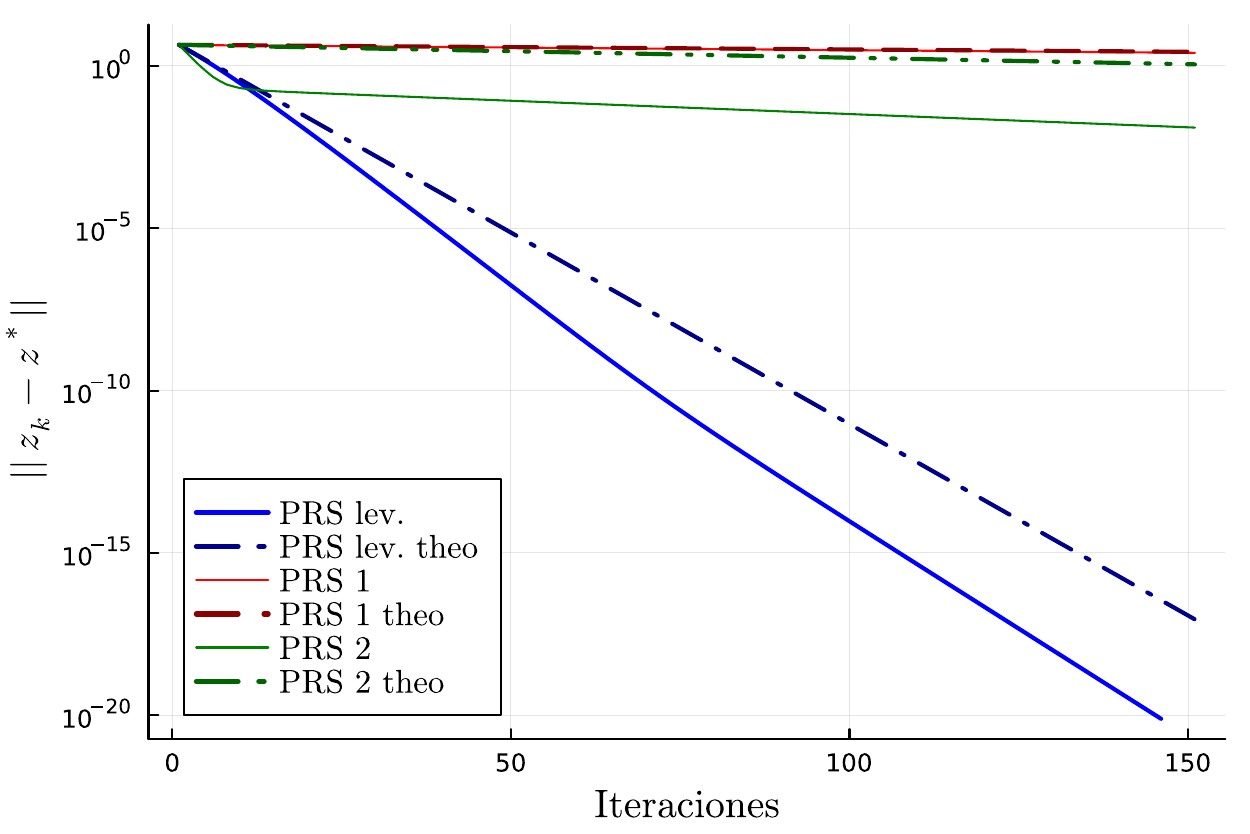}
		\caption{$(m,n,p)=(20,20,20)$}
		\label{fig:c}
	\end{subfigure}
	
	\vspace{0.8em}
	
	\begin{subfigure}[t]{0.5\textwidth}
		\centering
		\includegraphics[width=\linewidth]{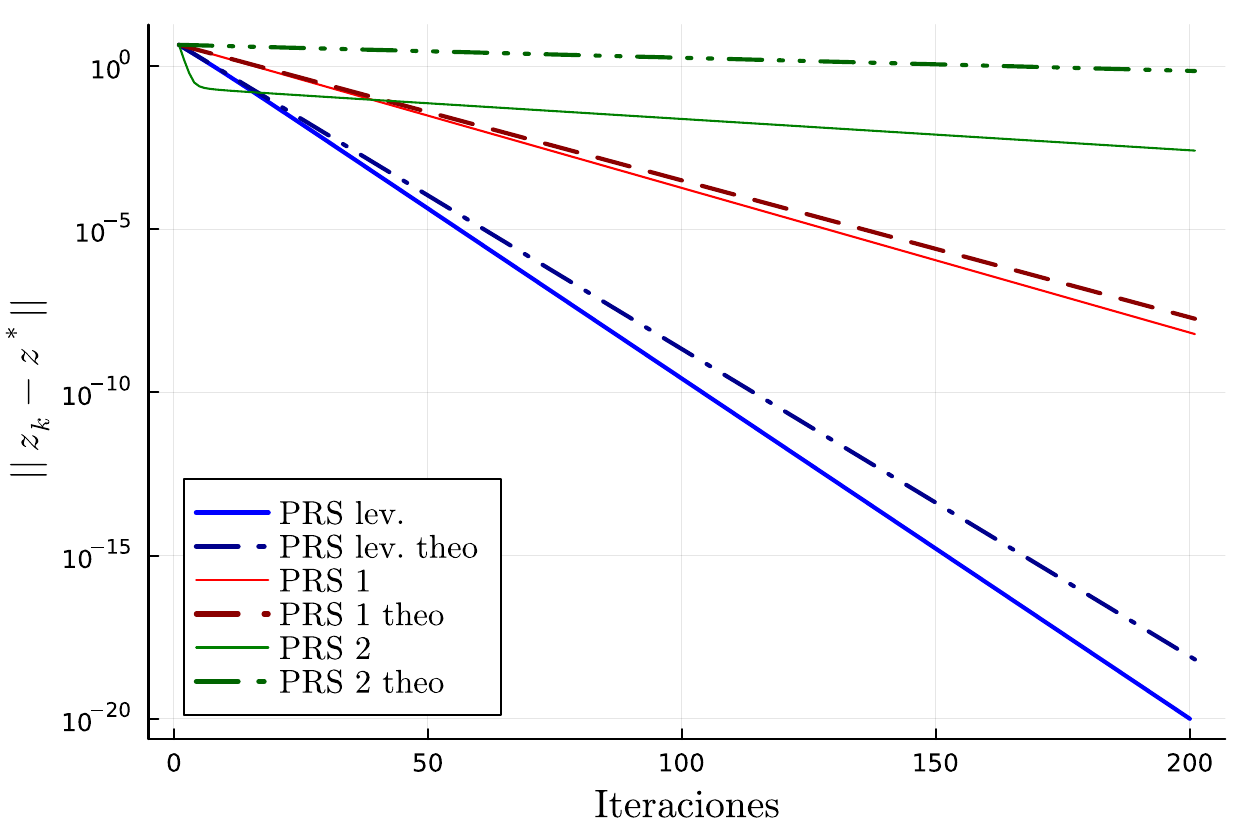}
		\caption{$(m,n,p)=(20,40,20)$}
		\label{fig:d}
	\end{subfigure}\hfill
	\begin{subfigure}[t]{0.5\textwidth}
		\centering
		\includegraphics[width=\linewidth]{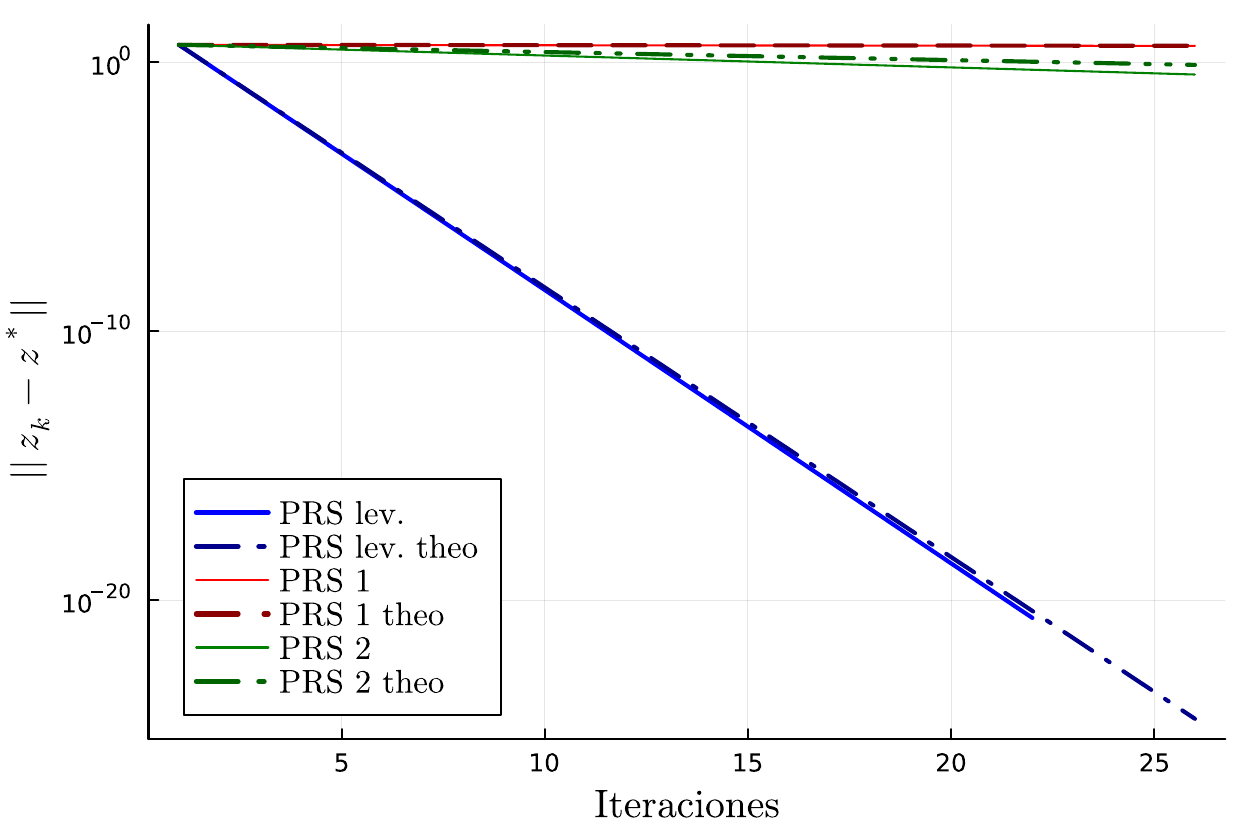}
		\caption{$(m,n,p)=(20,20,40)$}
		\label{fig:e}
	\end{subfigure}
	
	\caption{Theoretical bounds and numerical errors for random 
		instances in different dimensions.}
	\label{fig:cinco-subfiguras-iguales}
\end{figure}

\subsection{Image processing}
Let $(n_1,n_2,m_1,m_2) \in \N^4$ and let $x \in \R^{n_1\times n_2}$ be an image to be recovered from 
an observation
\begin{equation}\label{eq:modelim}
	b = Tx+\epsilon,
\end{equation}	
where $T\colon \R^{n_1\times n_2}\to \R^{m_1\times m_2}$ is an operator representing the observation process and $\epsilon$ 
is an additive noise. The original image $x$ can be approximated by solving Problem~\ref{prob:main}  
by setting $\HH = \R^{n\times n} $,
\begin{equation}
	f\colon x\mapsto \frac{1}{2}\|Tx-b\|^2,\quad \text{and}\quad 
	g\colon x\mapsto\lambda H^\varepsilon(Wx),
\end{equation}
where $\lambda \in \RPP$ is a regularization parameter, $H^{\varepsilon}:\R^{n_1\times n_2}\to \R$ is the Huber function defined by 
\begin{equation}\label{eq:def_huber}
	\left(\forall w = (\omega_{i,j})_{\substack{1\leq i \leq n_1\\1\leq j \leq n_2}} \in \R^{n_1\times n_2}\right) \quad H^{\varepsilon}(w) 
	=  \sum_{i=1}^{n_1}\sum_{j=1}^{n_2} h^{\varepsilon}(\omega_{i,j}), \quad  
\end{equation}
and
\begin{equation}
	(\forall \xi \in \R) \quad
	h^{\varepsilon} (\xi) = \begin{cases}
		|\xi|-\frac{\varepsilon}{2}, & \text{ if } |\xi | > \varepsilon,\\
		\frac{\xi^2}{2\varepsilon}, & \text{ otherwise}.
	\end{cases}
\end{equation}
and $W:\R^{n_1\times n_2}\to \R^{n_1\times n_2}$ is an orthogonal basis wavelet transform. Note that $f$ is $\lambda_{\min}(T^\top T)$-strongly convex, $\nabla f$ is $\|T\|^2$-Lipschitz continuous, $g$ is convex and $\nabla g= \lambda W^{\top} \nabla H^\varepsilon W$ is $\lambda/\varepsilon$-Lipschitz continuous. Thus, in the context of  Problem~\ref{prob:main}, we have $\rho = \lambda_{\min}(T^\top T)$, $\alpha = \|T\|^{-2}$, $\mu =0$, and $\beta=\varepsilon/\lambda$. We solve this problem using our approach, PRS with the optimal step-size in \cite[Proposition~3]{Giselsson2017IEEE}, and 
FISTA in its leveraged form for strongly convex problems \cite{BeckTeboulle2009,Briceno2025Fista}. 
Note that we can apply FISTA either with a backward step in $g$ and a forward step in $f$, or vice versa, which are called by FISTA 1 and FISTA 2, respectively.

We consider the scenarios where $T$ is a blur operator in image restoration and where $T$ is the discrete Radon transform in computed tomography (CT) \cite{Kak2001}. In each experiment, we use a stopping criterion consisting of a maximum of $10^3$ iterations and a {\it normalized error} with a tolerance of $10^{-12}$, i.e., given a solution $z^*$, the algorithm stops if
\begin{equation*}
	\frac{\|z_{n+1}-z^*\|}{\|z_0-z^*\|}<  10^{-12}.
\end{equation*}

\subsubsection{Image Restoration}
In this case, $T$ is a Gaussian point spread function of size $5\times 5$ and standard deviation $\sigma \in \RPP$ and $\epsilon$ is a zero mean Gaussian noise with variance $0.008$. We set $\varepsilon = 0.01$ and $\lambda = 0.07$. The operator $W$ is a level-3 Haar basis wavelet transform. As a test image, we use the $512\times 512$ image shown in Figure~\ref{fig:x_peppers}. To test different values of $\rho$, we consider $\sigma \in \{0.5,0.6\}$ which yield the values of $\rho$ described in Table~\ref{Tab:denoising}. This table also presents the results of our experiments. Note that PRS lev achieves better performance in terms of iteration number and CPU time in all cases when compared with PRS and the two versions of FISTA. Although its advantage over PRS is slight, Figure~\ref{fig:error_blur} illustrates both the theoretical and experimental benefits of the improved convergence rate of PRS lev in this case. The blurred and noisy image, as well as the denoised results for each case, are shown in Figures \ref{fig:s05} and \ref{fig:s06}, respectively.
\begin{table}[h!]
	\centering
	\setlength{\tabcolsep}{2.5pt}
	\begin{tabular}{cccccccccccccccc}
		\multicolumn{5}{c}{ }& \multicolumn{2}{c}{\bf PRS lev} & \multicolumn{2}{c}{\bf PRS} & \multicolumn{2}{c}{\bf FISTA 1} &\multicolumn{2}{c}{\bf FISTA 2} \\ \cline{6-13}
		$\sigma$ & $\rho$ & $\alpha$ & $\mu $& $\beta$  & Iter. & Time (s) & Iter. & Time (s) & Iter. & Time (s) & Iter. & Time (s)  \\\hline
		$0.5$ & 0.11 & 1 & 0 & 0.1429 & 39 & 1.38 & 42 & 1.54& 59 & 2.06 & 189 & 6.93 \\
		$0.6$ & 0.013 & 1 & 0 & 0.1429 & 114 & 4.04 & 122 & 4.43 & 167 & 5.77 & 517 & 18.36   \\
	\end{tabular}
	\caption{Results in terms of number of iterations and CPU time in second. We consider $tol=10^{-12}$, $niter=1000$}\label{Tab:denoising}
\end{table}

\begin{figure}
	\centering
	\subfloat[\scriptsize $\sigma = 0.5$]{\label{fig:resultado_05}\includegraphics[width=0.9\textwidth]{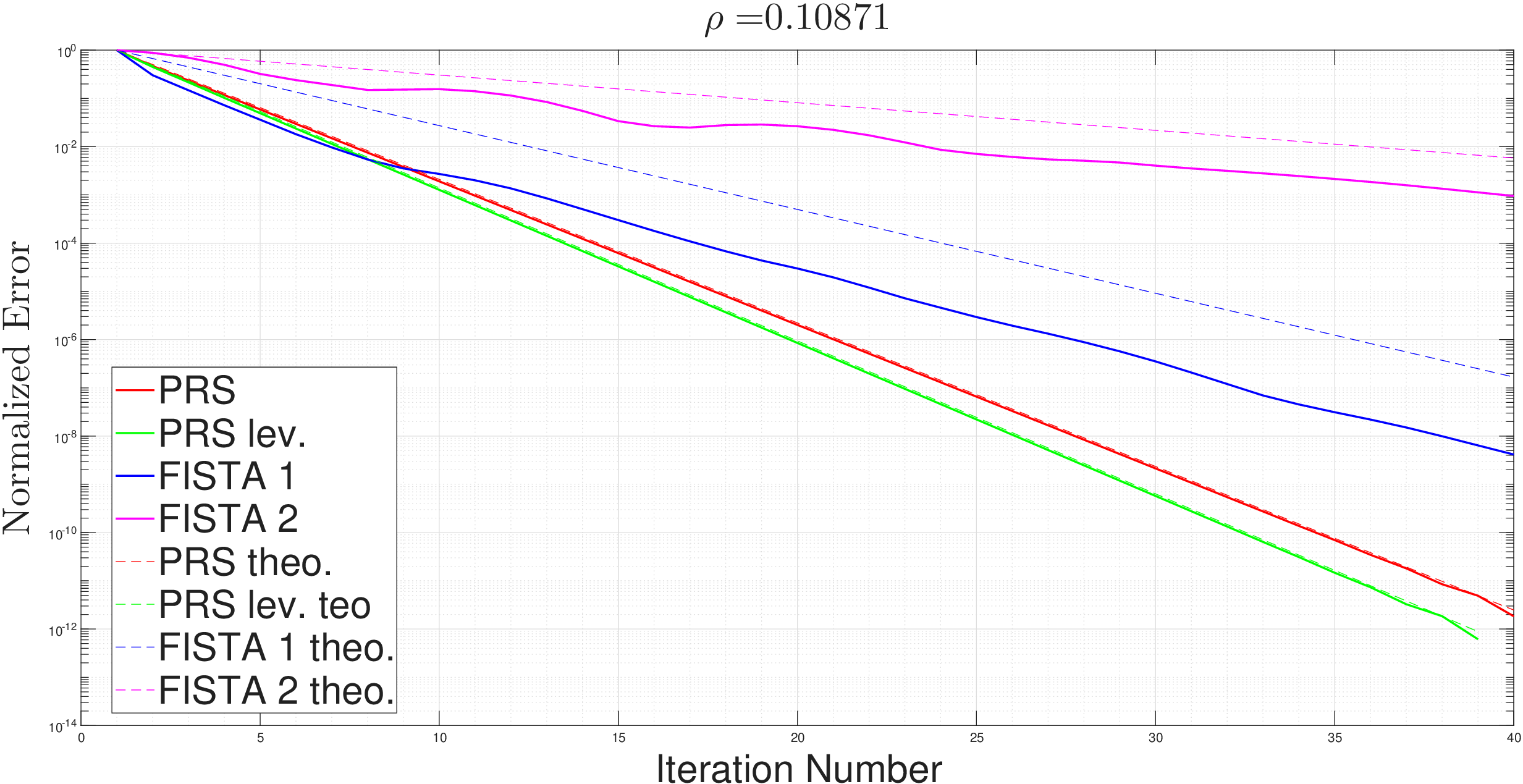}}\\[1cm]
	\subfloat[\scriptsize $\sigma = 0.6$]{\label{fig:resultado_06}\includegraphics[width=0.9\textwidth]{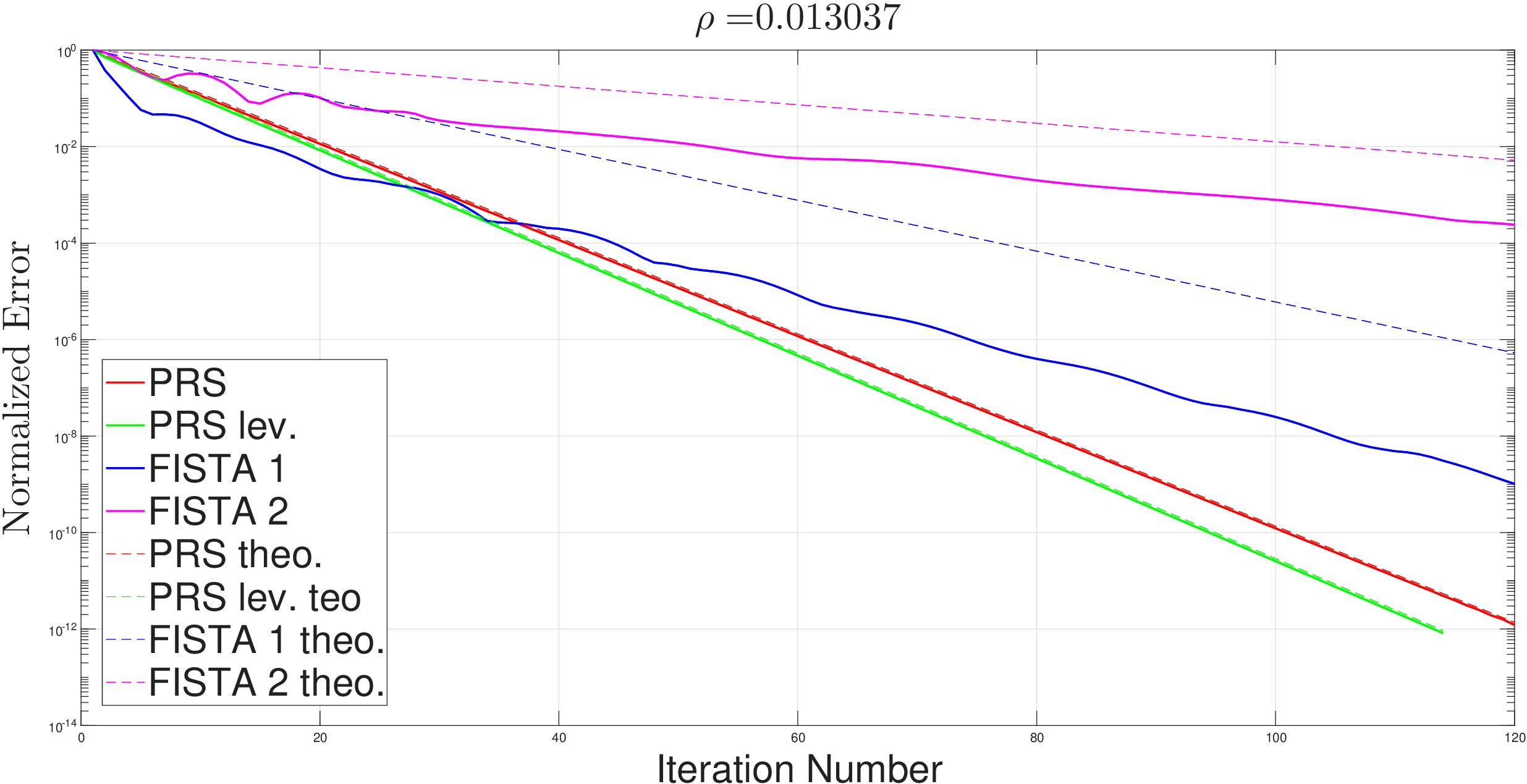}}\,
	\captionsetup{width=\textwidth} \caption{Normalized error vs Iteration number.} 
	\label{fig:error_blur}
\end{figure}
\begin{figure}
	\centering
	\subfloat[Deblurring]{\label{fig:x_peppers}\includegraphics[width=0.19\textwidth]{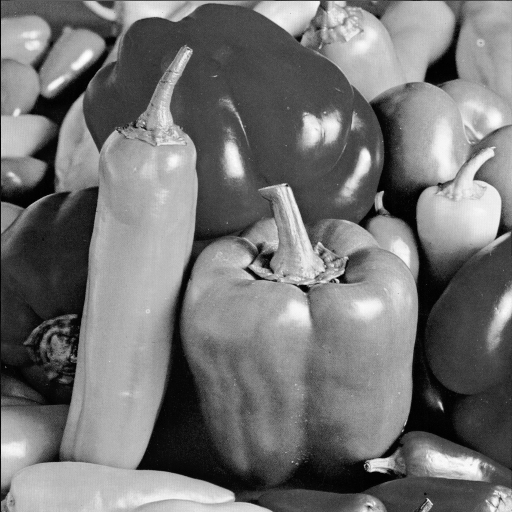}}\quad  \subfloat[C. T.]{\label{fig:x_phantom}\includegraphics[width=0.19\textwidth]{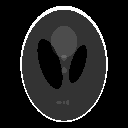}} 
	\label{fig:original} \caption{Original Images} 
\end{figure}

\begin{figure}
	\centering
	\subfloat[\scriptsize Blur and Noisy]{\label{fig:b_05}\includegraphics[width=0.19\textwidth]{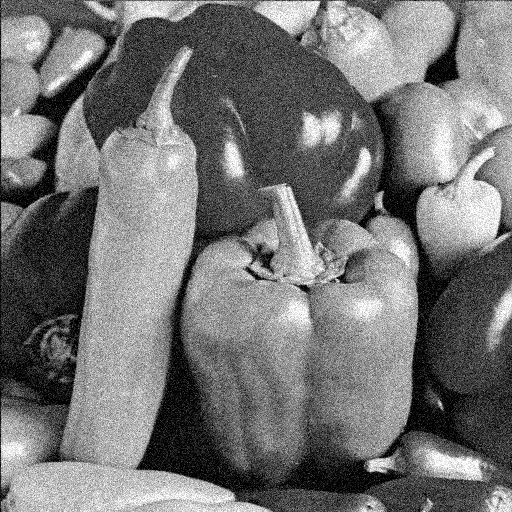}}\,
	\subfloat[\scriptsize PRS lev.]{\label{fig:x_PR_05}\includegraphics[width=0.19\textwidth]{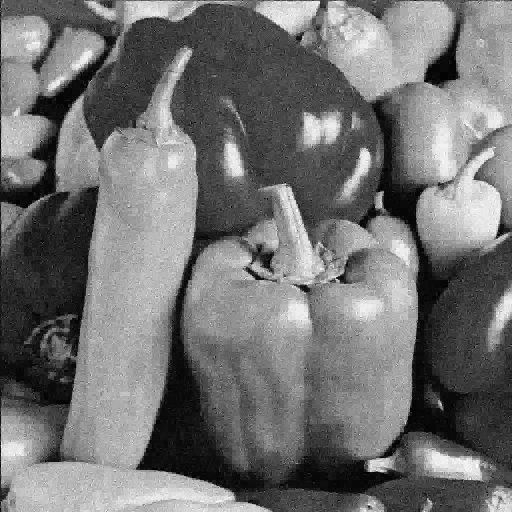}}\,
	\subfloat[\scriptsize PRS]{\label{fig:x_PRO_05}\includegraphics[width=0.19\textwidth]{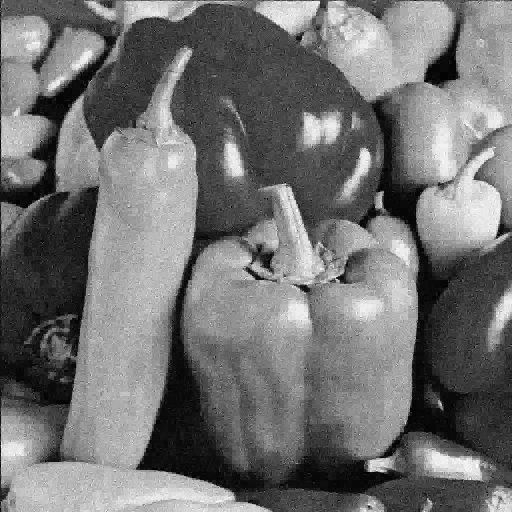}}\,
	\subfloat[\scriptsize FISTA 1]{\label{fig:x_FB_05}\includegraphics[width=0.19\textwidth]{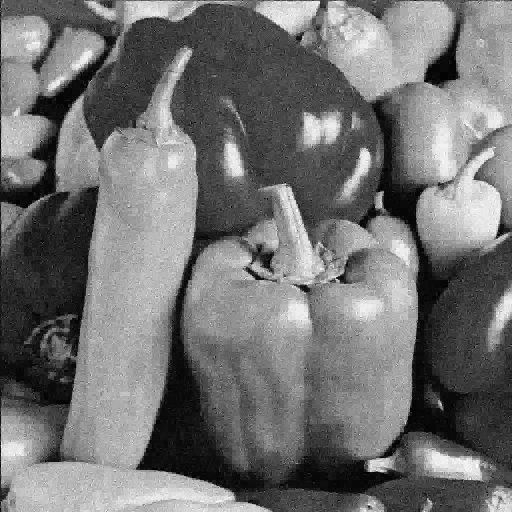}}\,
	\subfloat[\scriptsize FISTA 2]{\label{fig:x_FB2_05}\includegraphics[width=0.19\textwidth]{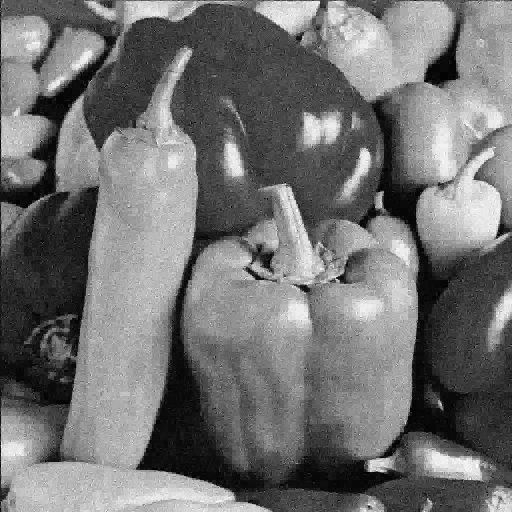}}
	\captionsetup{width=\textwidth} \caption{$\sigma =0.5$, $\rho = 0.11$} 
	\label{fig:s05}
\end{figure}

\begin{figure}
	\centering
	\subfloat[\scriptsize Blur and Noisy]{\label{fig:b_06}\includegraphics[width=0.19\textwidth]{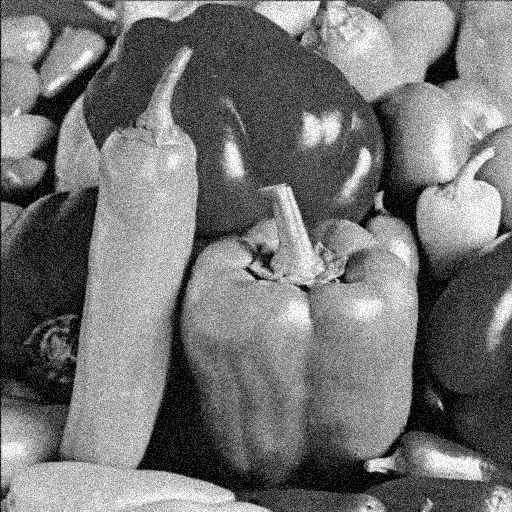}}\,
	\subfloat[\scriptsize PRS lev.]{\label{fig:x_PR_06}\includegraphics[width=0.19\textwidth]{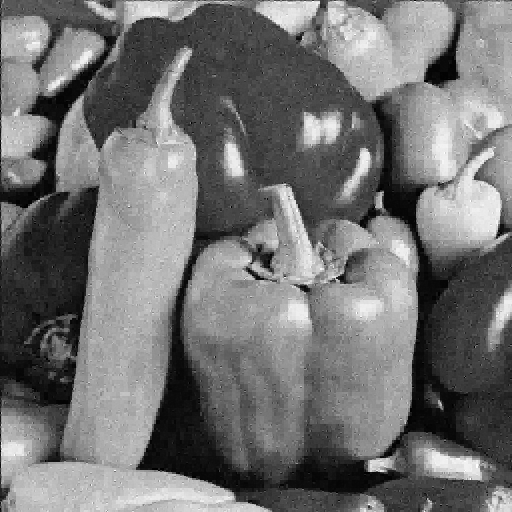}}\,
	\subfloat[\scriptsize PRS]{\label{fig:x_PRO_06}\includegraphics[width=0.19\textwidth]{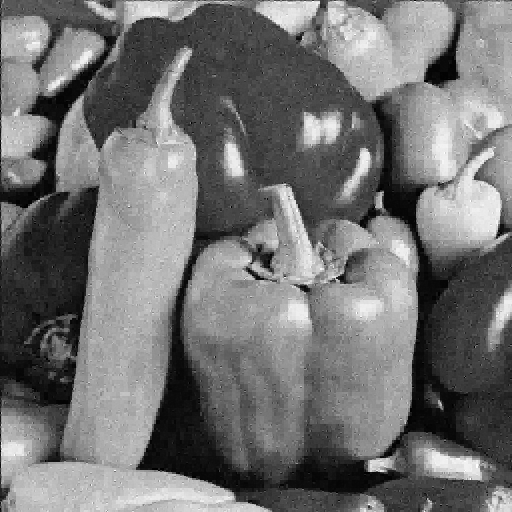}}\,
	\subfloat[\scriptsize FISTA 1]{\label{fig:x_FB_06}\includegraphics[width=0.19\textwidth]{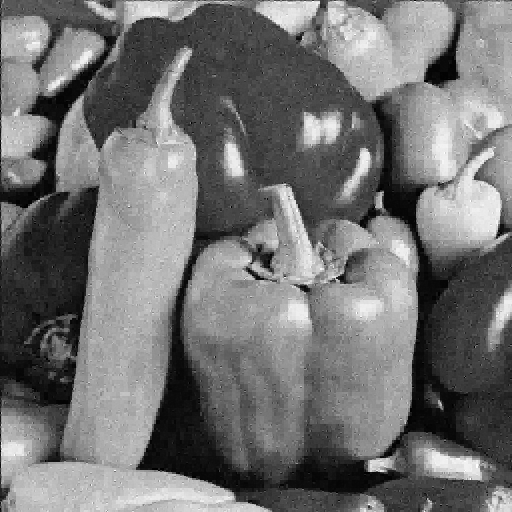}}\,
	\subfloat[\scriptsize FISTA 2]{\label{fig:x_FB2_06}\includegraphics[width=0.19\textwidth]{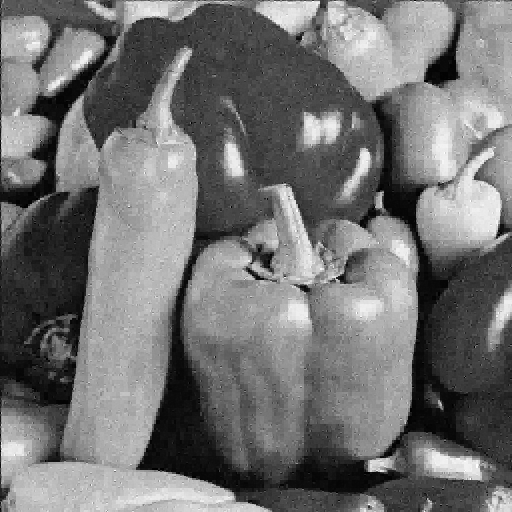}}
	\captionsetup{width=\textwidth} \caption{$\sigma =0.6$, $\rho = 0.013$} 
	\label{fig:s06}
\end{figure}

\subsubsection{Computed Tomography}
Here $T$ is the discretized Radon projector given by the line length ray-driven projector \cite{zengRayDriven93}. It was implemented in MATLAB using the line fan-beam projector provided by the
ASTRA toolbox \cite{vanAarleASTRA2016,vanAarleASTRA2015} with a fan-beam geometry over $180^\circ$, the source to object distance is 800 mm, and the source-to-image distance is 1200 mm. The test image $x$ is created by the \emph{phantom} function in MATLAB and we considered $n_1=n_2 =n\in \{64,128\}$. When $n = 64$, we consider $150$ projection angles and a detector with $249$ bins, for $n=128$, we consider $360$ projection angles and a detector with 249 bins, which results in sinograms of size $(m_1,m_2)=(112,150)$ and $(m_1,m_2)=(249,360)$, respectively. Moreover, we consider $\varepsilon = \lambda = 10^{-5}$ and $W$ as an orthonormal Symmlet basis wavelet transform of level $2$. The values of $\rho$, $\alpha$, and $\beta$, for each case, are described in Table~\ref{Tab:CT}. To implement PRS lev, PRS, and FISTA 2, we calculate $\prox_{f}$ using the MATLAB function {\it inv} once outside the iterative scheme (the time required for this computation is included in the total running time of each algorithm). From Table~\ref{Tab:CT} we observe that, in each case, PRS lev. exhibits the best overall performance. Note that PRS and FISTA reach the maximum number of iterations. Although FISTA 2 is competitive with PRS lev., Figure~\ref{fig:error_CT} shows that it is initially fast but subsequently follows its theoretical convergence rate, which is slower than that of PRS lev. The blurred and noisy image, along with the denoised reconstructions for each case, are displayed in Figures~\ref{fig:n64} and \ref{fig:n128}, respectively. We also observe that the reconstructions produced by PRS and FISTA 1 remain far from the target solution.
\begin{table}[h!]
	\centering
	\setlength{\tabcolsep}{2.5pt}
	\begin{tabular}{ccccccccccccc}
		\multicolumn{5}{c}{ }& \multicolumn{2}{c}{\bf PRS lev} & \multicolumn{2}{c}{\bf PRS} & \multicolumn{2}{c}{\bf FISTA 1} &\multicolumn{2}{c}{\bf FISTA 2} \\ \cline{6-13}
		$n$ & $\rho$ & $\alpha$ & $\mu $& $\beta$  & Iter. & Time (s) & Iter. & Time (s) & Iter. & Time (s) & Iter. & Time (s)  \\ \hline
		$64$ & 0.0128& $5.6 \cdot 10^{-5}$ & 0 & 1 & 124 & 1.31 & 1000 & 5.24 & 1000 & 12.47& 145 & 1.44  \\
		$128$ & 0.0026 & $1.13 \cdot 10^{-5}$ & 0 & 1 & 274 & 27.0 & 1000 & 51.51& 1000 & 189.0 & 334 & 31.0 \\
	\end{tabular}
	\caption{Results in terms of number of iterations and CPU time in second. We consider $tol=10^{-12}$, $niter=1000$.}\label{Tab:CT}
\end{table}

\begin{figure}
	\centering
	\subfloat[$n =64$]{\label{fig:resultado_n64}\includegraphics[width=0.9\textwidth]{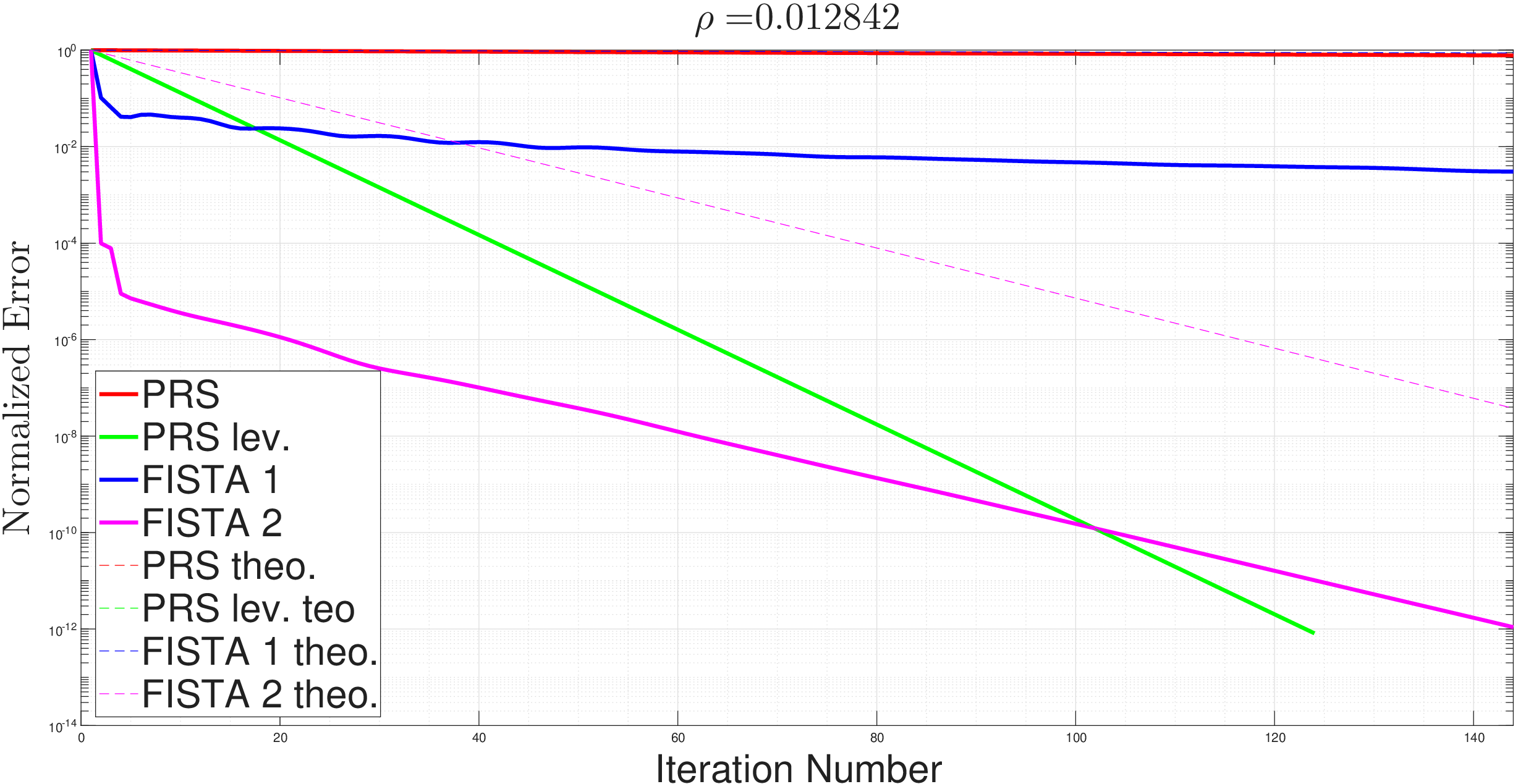}}\\	
	\subfloat[$n =128$]{\label{fig:resultado_n128}\includegraphics[width=0.9\textwidth]{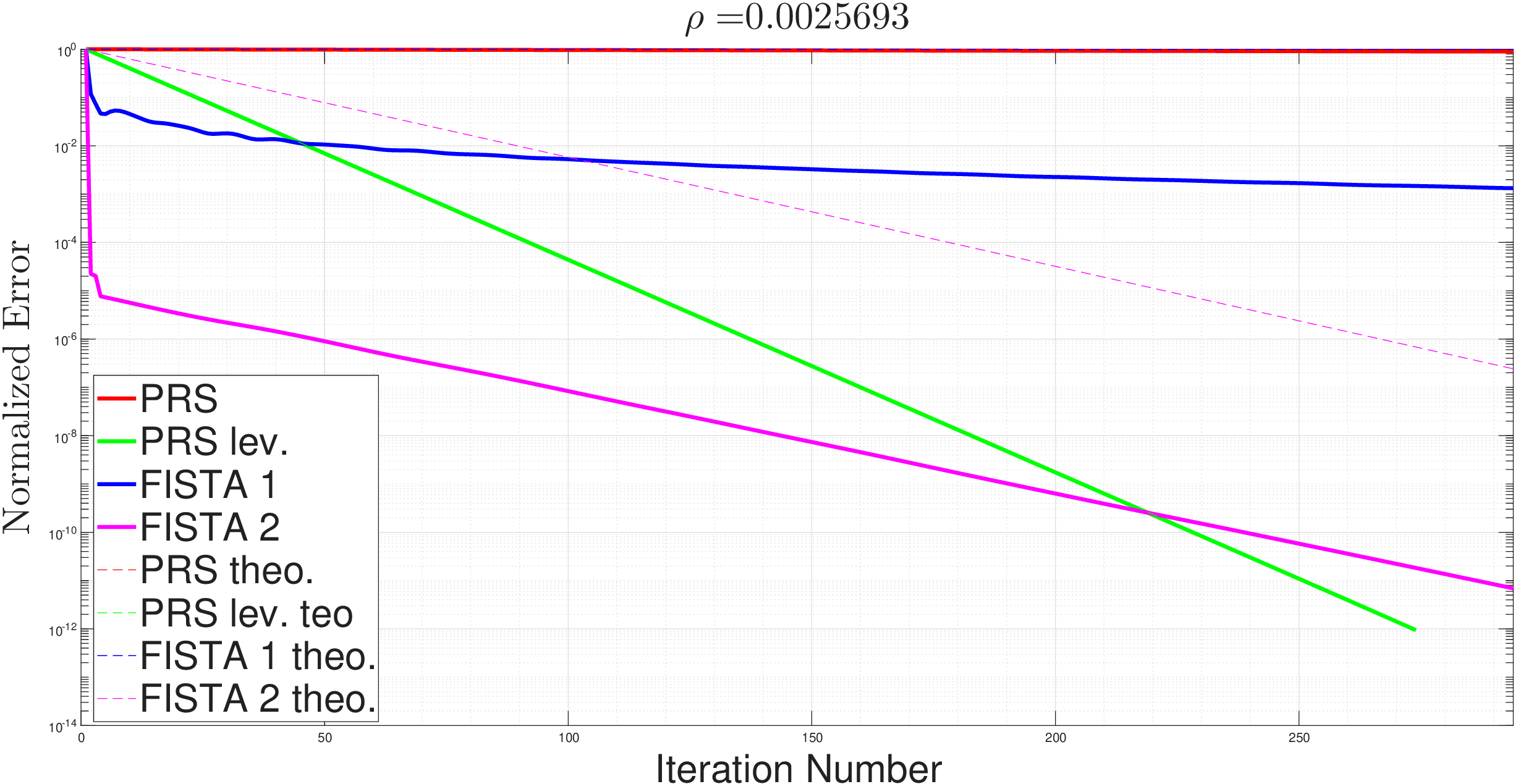}}\,
	\captionsetup{width=\textwidth} \caption{Normalized error vs Iteration number.} 
	\label{fig:error_CT}
\end{figure}


\begin{figure}
	\centering
	\subfloat[\scriptsize Noisy Observation]{\label{fig:b_64}\includegraphics[width=0.19\textwidth]{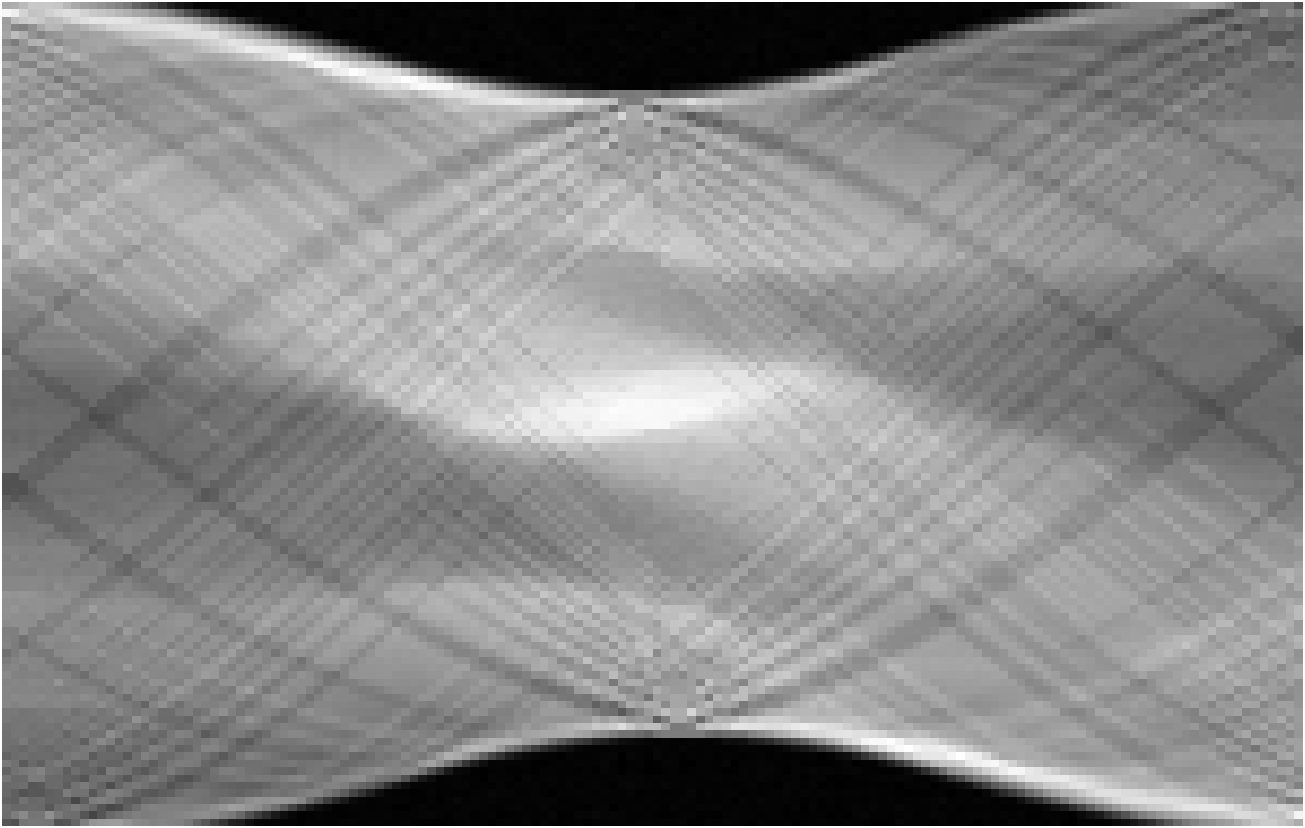}}\,
	\subfloat[\scriptsize PRS lev.]{\label{fig:x_PRO_64}\includegraphics[width=0.19\textwidth]{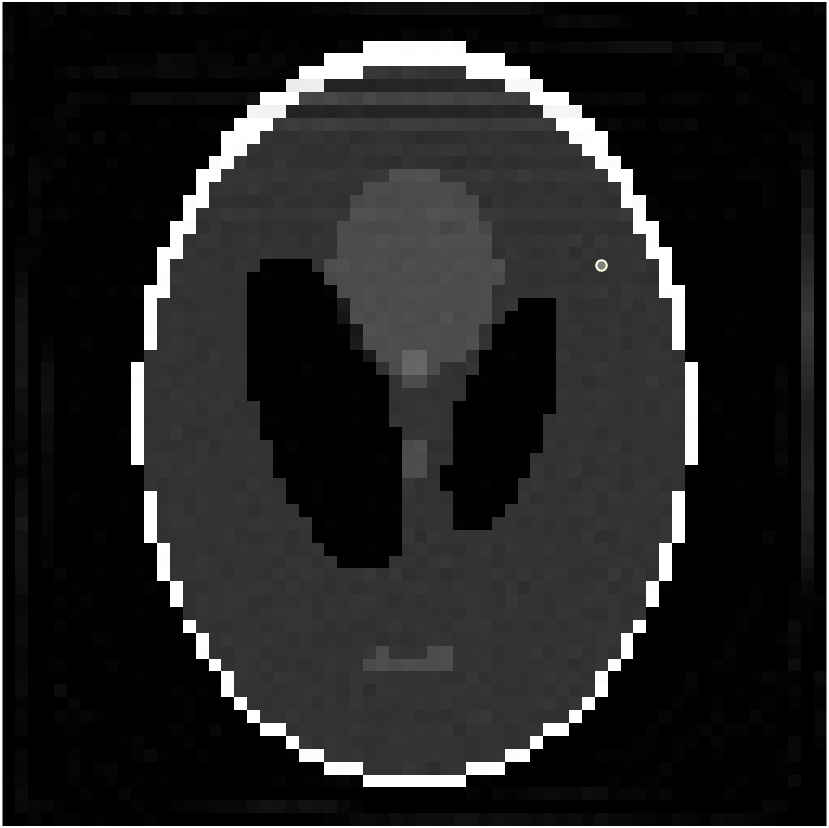}}\,
	\subfloat[\scriptsize PRS]{\label{fig:x_PR_64}\includegraphics[width=0.19\textwidth]{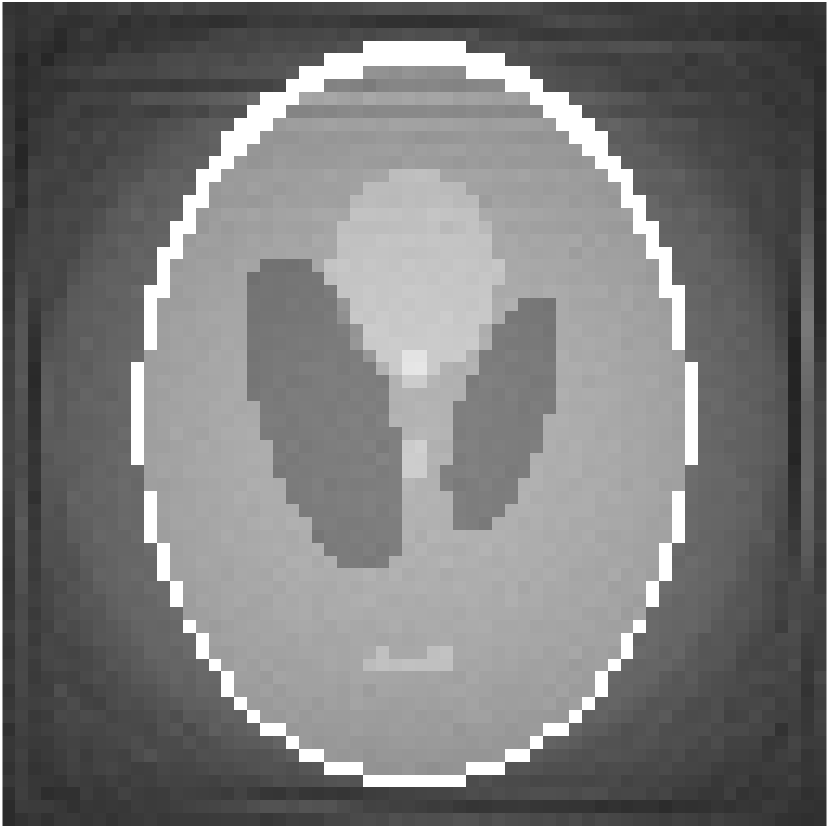}}\,
	\subfloat[\scriptsize FISTA 1]{\label{fig:x_FB_64}\includegraphics[width=0.19\textwidth]{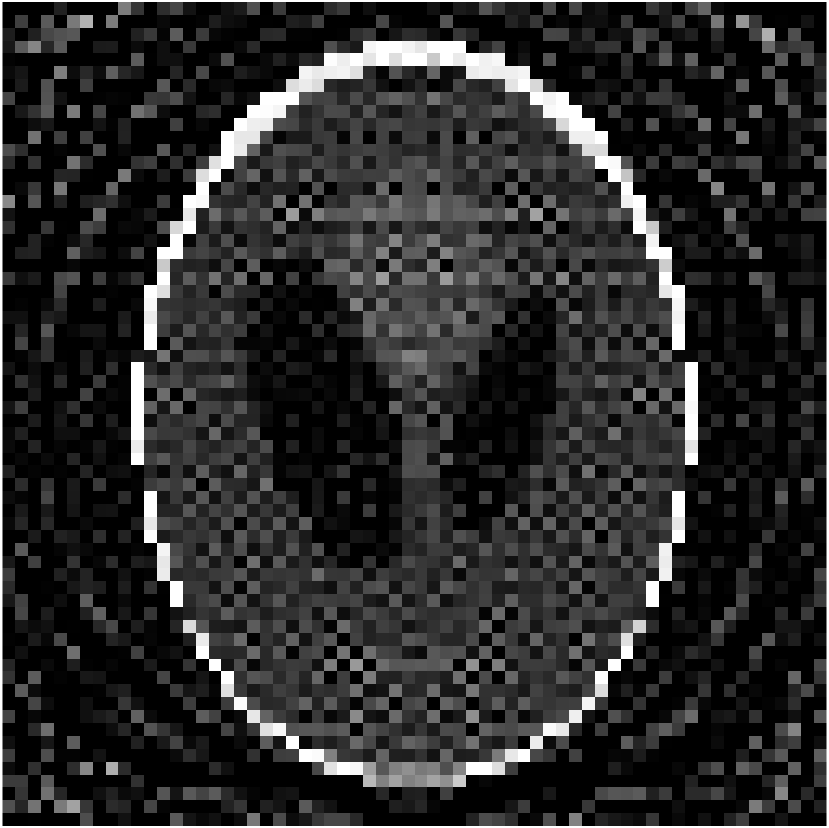}}\,
	\subfloat[\scriptsize FISTA 2]{\label{fig:x_FB2_64}\includegraphics[width=0.19\textwidth]{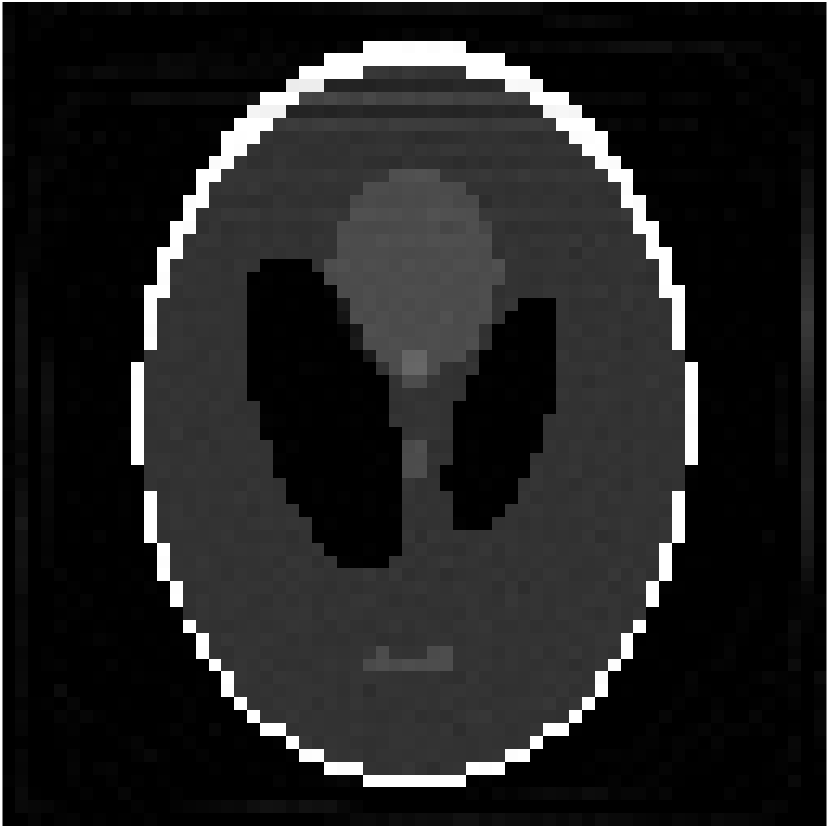}}
	\captionsetup{width=\textwidth} \caption{$m = 64$.} 
	\label{fig:n64}
\end{figure}

\begin{figure}
	\centering
	\subfloat[\scriptsize Noisy Observation]{\label{fig:b_128}\includegraphics[width=0.19\textwidth]{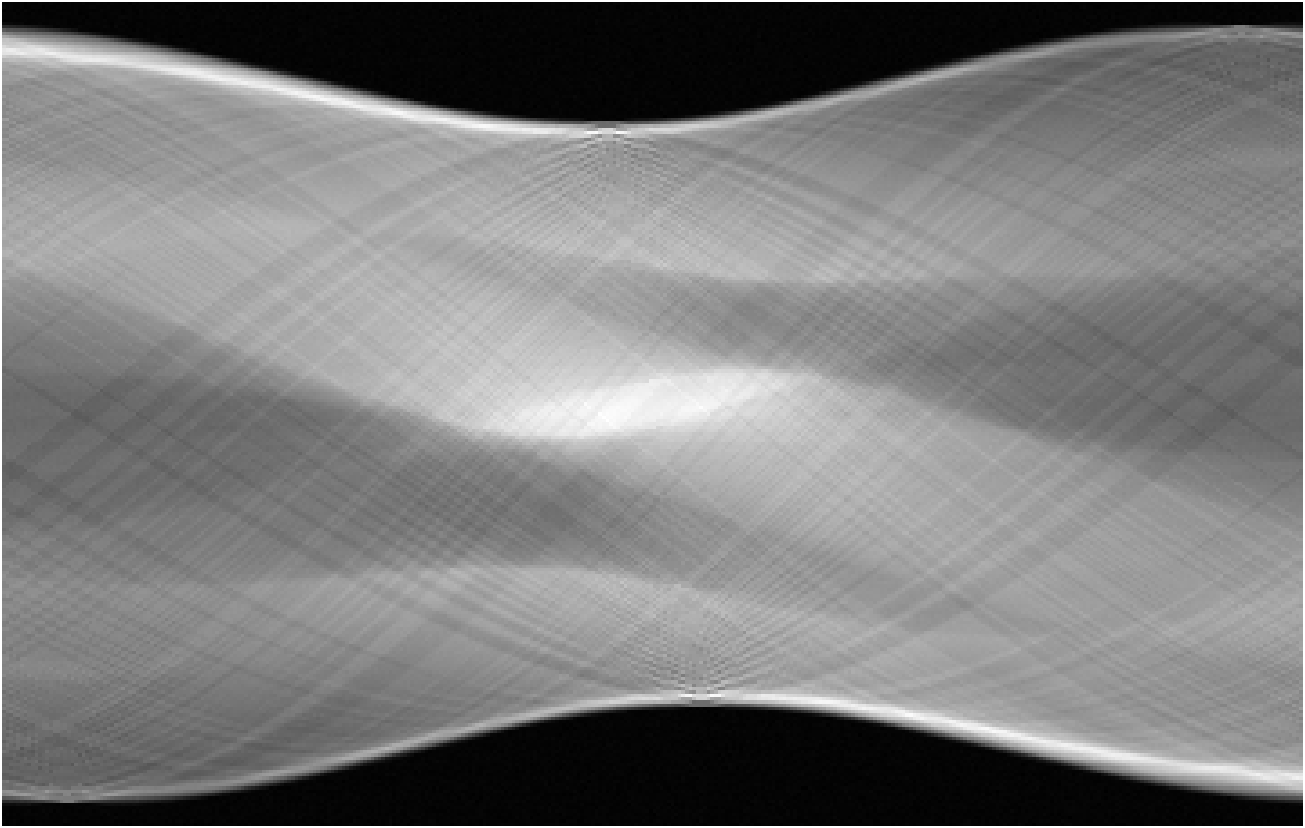}}\,
	\subfloat[\scriptsize PRS lev.]{\label{fig:x_PRO_128}\includegraphics[width=0.19\textwidth]{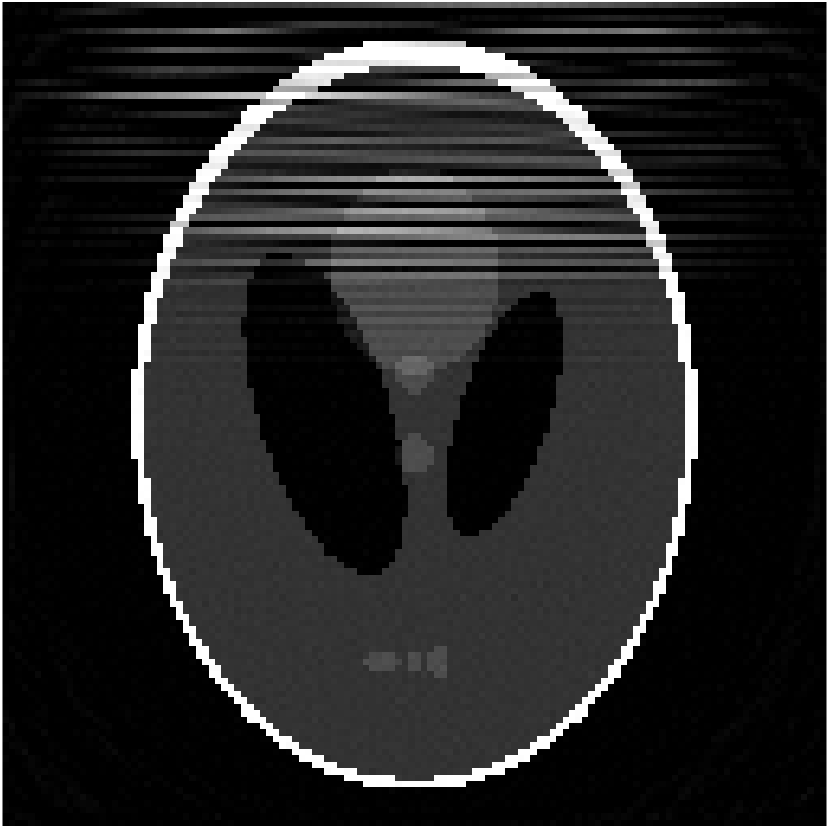}}\,
	\subfloat[\scriptsize PRS]{\label{fig:x_PR_128}\includegraphics[width=0.19\textwidth]{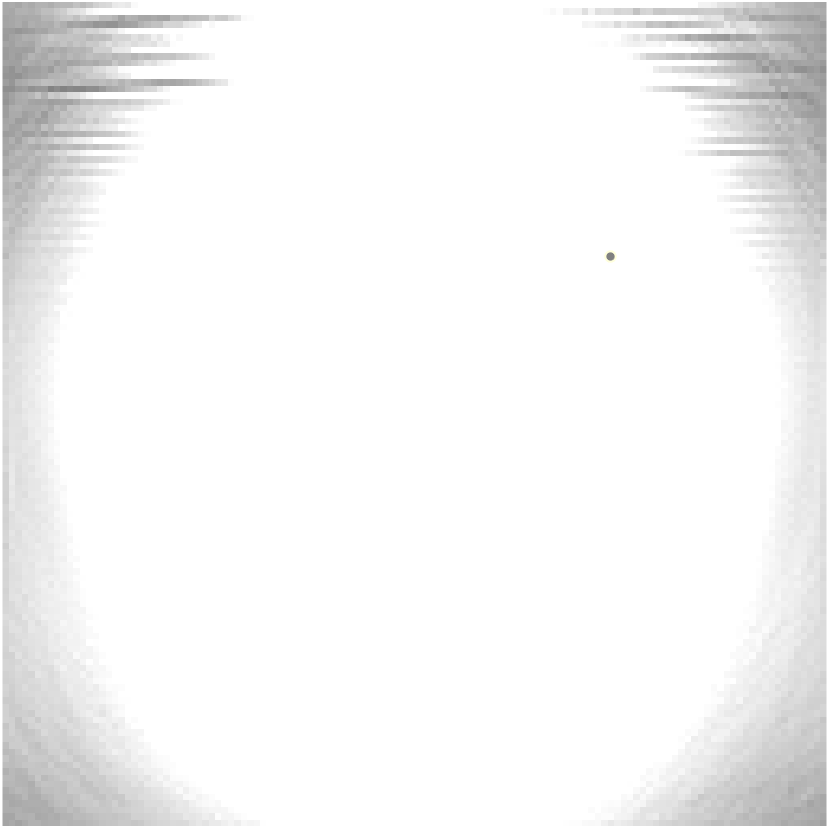}}\,
	\subfloat[\scriptsize FISTA 1]{\label{fig:x_FB_128}\includegraphics[width=0.19\textwidth]{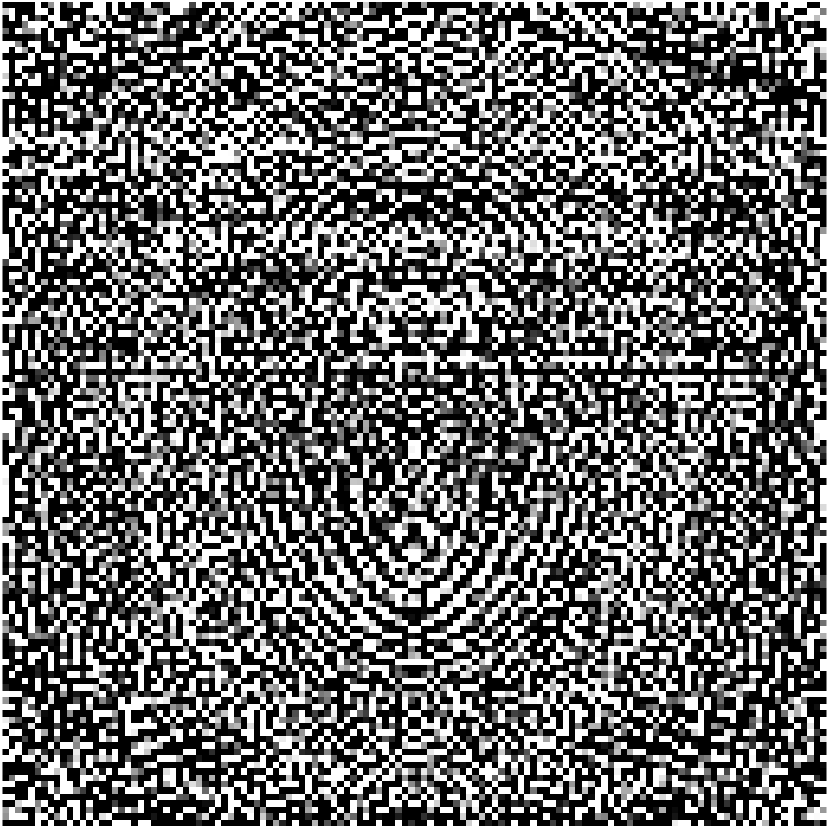}}\,
	\subfloat[\scriptsize FISTA 2]{\label{fig:x_FB2_128}\includegraphics[width=0.19\textwidth]{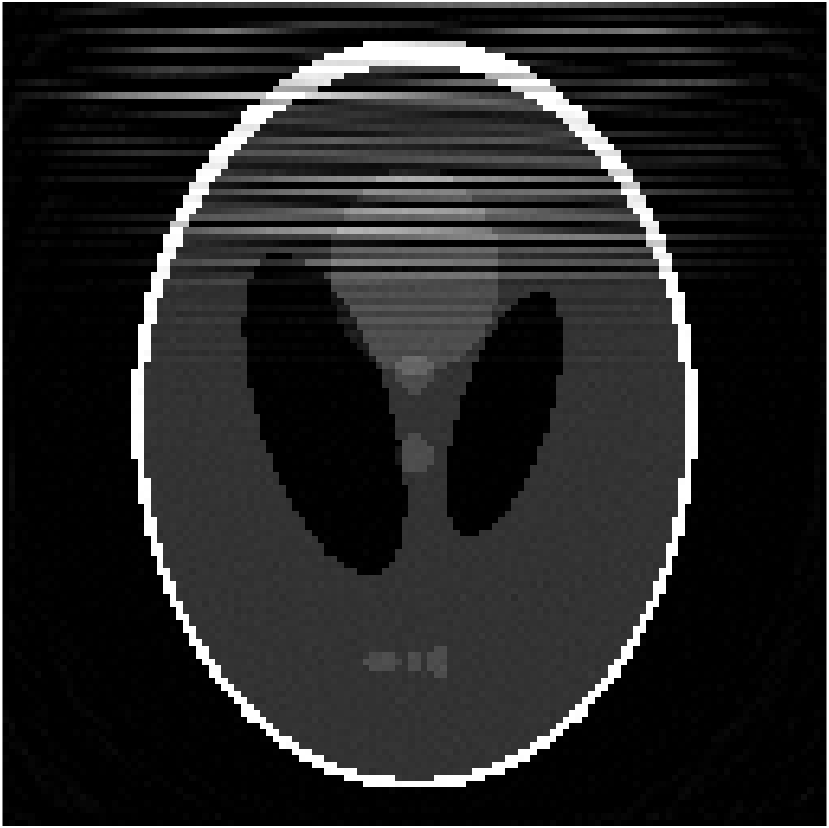}}
	\captionsetup{width=\textwidth} \caption{$m=128$.} 
	\label{fig:n128}
\end{figure}

\section{Appendix: Proof of $r(\delta)$ constant in  Theorem~\ref{t:main}}
	\begin{proof}{Proof}
		Note that $r(\delta)$ defined in \eqref{e:rd} can be written as $r(\delta)=\frac{N(\delta)}{D(\delta)}$, where
		\begin{equation}
			\label{e:rd2}
			\begin{aligned}
				N(\delta)&=\big(ab(1+\alpha \delta) - cd (\rho+\delta)\big)\big(bc(1-\beta \delta)-ad(\mu-\delta)\big)\\
				D(\delta)&=\big(ab(1+\alpha \delta) + cd (\rho+\delta)\big)\big(bc(1-\beta \delta)+ad(\mu-\delta)\big),   
			\end{aligned}
		\end{equation}
		and 
		\begin{equation}
			\label{e:defabcd}
			\begin{aligned}
				a=\sqrt{1+\beta \rho},\:\:
				b= \sqrt{\rho + \mu},\\
				c=\sqrt{1+\alpha \mu},\:\:
				d = \sqrt{\alpha+\beta}.    
			\end{aligned}
		\end{equation}
		Then, by setting $A_1 = (ab- cd \rho ) $, $A_2= (ab\alpha - cd)$, $A_3 =(bc-ad\mu) $, $A_4 =  (bc\beta- ad)$, we have
		\begin{align}\label{eq:app0}
			N(\delta)
			& = \big((ab- cd \rho ) + (ab\alpha - cd)\delta \big) \big((bc-ad\mu)- (bc\beta- ad)\delta\big) \nonumber \\
			& = \big(A_1+ A_2\delta \big) \big(A_3 - A_4\delta\big) \nonumber \\
			& = A_1A_3-(A_1A_4-A_2A_3)\delta-A_2A_4 \delta^2.
		\end{align}
		Moreover, it follows from \eqref{e:defabcd} that
		\begin{align}\label{eq:app1}
			A_1 A_3 &= (ab- cd \rho )(bc-ad\mu) \nonumber \\
			&= ab^2c-a^2bd\mu-c^2bd\rho+acd^2\rho\mu \nonumber\\
			&=ac(b^2+d^2\rho\mu)-bd(a^2\mu+c^2\rho) \nonumber\\
			&=ac(\rho+\mu+(\alpha+\beta)\rho\mu)-bd((1+\beta\rho)\mu+(1+\alpha\mu)\rho) \nonumber\\
			&=(ac-bd)((1+\beta\rho)\mu+(1+\alpha\mu)\rho)
		\end{align}
		and
		\begin{align}\label{eq:app2}
			A_2A_4 &= (ab\alpha - cd)(bc\beta- ad)\nonumber\\
			&=ab^2c\alpha\beta -a^2bd\alpha-bc^2d\beta+acd^2\nonumber\\
			&=ac(b^2\alpha\beta+d^2)-bd(a^2\alpha+c^2\beta)\nonumber\\
			&=ac((\rho+\mu)\alpha\beta+\alpha+\beta)-bd((1+\beta\rho)\alpha+(1+\alpha\mu)\beta)\nonumber\\
			&=(ac-bd)((1+\beta\rho)\alpha+(1+\alpha\mu)\beta).
		\end{align}
		Hence, by defining $B_1=((1+\beta\rho)\mu+(1+\alpha\mu)\rho)$ and $B_2 = ((1+\beta\rho)\alpha+(1+\alpha\mu)\beta)$ it follows from \eqref{eq:app0}
		, \eqref{eq:app1}, and  \eqref{eq:app2} that
		\begin{align}\label{eq:app3}
			N(\delta)
			&= (ac-bd)\left( B_1- \frac{(B_1A_4-B_2A_3)}{A_3A_4} \delta-B_2\delta^2\right),
		\end{align}
		where $A_3A_4\neq 0$ because $\beta\mu<1$.
		Analogously, we derive
		\begin{align}\label{eq:app4}
			D(\delta)
			&= (ac+bd)\left( B_1- \frac{(B_1A_4-B_2A_3)}{A_3A_4} \delta-B_2\delta^2\right).
		\end{align}
		Finally, from \eqref{eq:app3} and \eqref{eq:app4} we conclude
		\begin{equation*}
			r(\delta) = \frac{N(\delta)}{D(\delta)}=\frac{ac-bd}{ac+bd} = \frac{\sqrt{(1+\beta\rho)(1+\alpha\mu)}-\sqrt{(\alpha+\beta)(\rho+\mu)}}{\sqrt{(1+\beta\rho)(1+\alpha\mu)}+\sqrt{(\alpha+\beta)(\rho+\mu)}}
		\end{equation*}
		and the result follows.     
	\end{proof}

\section{Acknowledgment}
The work of Luis M. Brice\~no-Arias is supported by  Centro de 
	Modelamiento Matem\'atico (CMM), FB210005, BASAL fund for 
	centers of excellence,  and FONDECYT 
	1230257 from ANID-Chile. The work of Fernando Roldán is supported by ANID-Chile under grant FONDECYT Iniciación 11250164.



\end{document}